\newtheorem{thm}{Theorem}[section]
\newtheorem{cor}[thm]{Corollary}
\newtheorem{prop}[thm]{Proposition}
\newtheorem{lemma}[thm]{Lemma}
\theoremstyle{definition}
\newtheorem{dfn}[thm]{Definition}
\newtheorem{ex}[thm]{Example}
\newtheorem*{remark}{Remark}
\newtheorem*{remarks}{Remarks}
\newtheorem*{note}{Note}
\newtheorem*{ack}{Acknowledgements}
\def\eq#1{{\rm(\ref{#1})}}
\def\R{{\mathbb R}}
\def\GL{\mathbin{\rm GL}}
\def\SU{\mathop{\rm SU}}
\def\SO{\mathop{\rm SO}}
\def\G2{\mathop\textrm{G}_2}
\def\d{{\rm d}}
\def\w{\wedge}
\DeclareMathOperator{\id}{id}
\def\RE{{\mathbb R}}
\def\bfn{\mathbf{n}}
\def\bfv{\mathbf{v}}
\def\HH{\mathcal{H}}
\def\p{\partial}
\def\we{\wedge}
\def\eps{\varepsilon}
\let\phi=\varphi
\begin{document}

\title{Deformations of Compact Coassociative 4-folds with Boundary}
\author{
\begin{minipage}{2.25in}\begin{center}
\textsc{Alexei Kovalev}\\ DPMMS\\ University of Cambridge
\end{center}\end{minipage}
\begin{minipage}{2.25in}\begin{center}
\textsc{Jason D. Lotay}\\ University College\\ Oxford
\end{center}\end{minipage}\\
\rule{1ex}{0ex}
}
\maketitle

\section{Introduction}

Coassociative 4-folds are a particular class of 4-dimensional submanifolds
which may be defined in a 7-dimensional manifold $M$ endowed with a
`$\G2$ form' $\phi$. The latter is a differential 3-form which is invariant
at each point under the action of the exceptional Lie group $\G2$. This
3-form induces a $\G2$-\emph{structure} on~$M$ and, consequently, a
Riemannian metric and orientation. If the form $\phi$ is coclosed then every
coassociative 4-fold in~$M$ is calibrated and hence minimal. See \S\ref{s2}
for precise definitions and a summary of the relevant theory. Coassociative
submanifolds were introduced by Harvey and Lawson~\cite{HarLaw} who also
gave $\SU(2)$-invariant examples of these submanifolds in Euclidean
$\RE^7$. Examples of compact coassociative submanifolds of compact
7-manifolds with holonomy $\G2$ were given by Joyce~\cite{Joyce1} and later
by the first author~\cite{kov}.

McLean~\cite{McLean} showed that, when the $\G2$-structure 3-form
is closed, the deformations of a compact coassociative 4-fold without
boundary are \emph{unobstructed} and the moduli space of local deformations is
smooth with dimension determined by the \emph{topology} of this submanifold.

There is some analogy between coassociative 4-folds and special
Lagrangian submanifolds of Calabi--Yau manifolds. Both classes may be
defined by the vanishing of appropriate differential forms on the ambient
manifold. Compact closed special Lagrangian submanifolds also have an
unobstructed deformation theory and finite-dimensional moduli space with
topologically determined dimension, a result due again to
Mclean~\cite{McLean}. Butscher~\cite{Butscher} extended this result
to compact minimal Lagrangian submanifolds with boundary and showed that
these have a finite-dimensional moduli space of deformations if the
boundary lies in an appropriately chosen symplectic submanifold
which he called a {\em scaffold}.

On a 7-manifold endowed with a $\G2$-structure, there is also a distinguished class 
of 3-dimensional submanifolds known as associative 3-folds.  Recently, compact associative 
3-folds with boundary in a coassociative 4-fold were studied in \cite{Gayet}.  Here the deformation theory 
is obstructed, but the expected dimension of the moduli space is given in terms of the boundary of 
the associative 3-fold.

In this article, we study the deformations of compact coassociative 4-folds
$N$ with boundary in an particular fixed 6-dimensional submanifold
$S\subset M$ which, by analogy with~\cite{Butscher}, we also call a
scaffold (Definition~\ref{s3dfn2}). The condition on $S$ is that it has a
Hermitian symplectic structure compatible with the $\SU(3)$-structure it
inherits from $M$. We also require that the normal vectors to~$S$ at $\p N$
are tangent to~$N$. The culmination of the research presented here is the
following two theorems.

\begin{thm}\label{thm1}
Suppose that $M$ is a 7-manifold with a $\text{\emph{G}}_2$-structure given
by a closed 3-form. The moduli space of compact coassociative local deformations
of~$N$ in $M$ with boundary $\p N$ in a scaffold $S$ is a
finite-dimensional smooth manifold of dimension not greater than~$b^1(\p N)$.
\end{thm}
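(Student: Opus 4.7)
The plan is to formulate the problem as a nonlinear elliptic boundary value problem and apply Fredholm theory. By McLean's identification via the $\G2$-form, the normal bundle $\nu(N)$ is isomorphic to $\Lambda^2_+T^*N$, so deformations correspond to self-dual 2-forms $\alpha\in\Omega^2_+(N)$. The scaffold hypothesis implies $\nu(N)|_{\p N}\subset TS$: at any $p\in\p N$ one has $T_pN\cap T_pS=T_p\p N$, and the unit normal to $S$ in $M$ is tangent to $N$, so orthogonality forces $\nu_p(N)\subset T_pS$. A neighborhood of $N$ in the space of compact submanifolds with boundary in $S$ can therefore be parameterized by a neighborhood of $0$ in $\Omega^2_+(N)$, using the ambient exponential map suitably modified near $\p N$ to respect $S$. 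In this parameterization the coassociative condition becomes a nonlinear first-order PDE $F(\alpha)=0$ with $F(0)=0$ and $F'(0)=d$ (using $\d\phi=0$), supplemented by a linear boundary condition $B\alpha=0$ on $\p N$ that arises from the second fundamental form of $S$ in $M$ and from the coassociative equation near the boundary.

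The symbol computation shows $*d:\Omega^2_+(N)\to\Omega^1(N)$ is elliptic; I would then verify that $B$ satisfies the Lopatinski--Shapiro condition so that $(F,B)$ is an elliptic boundary value problem, Fredholm on suitable Sobolev completions. For smoothness of the moduli space I would apply the implicit function theorem in Banach spaces, after showing that $F$ maps into the image of its linearization: the identity $\d\phi=0$ implies $F(\alpha)$ is a closed 3-form on $N$, and a Hodge-type decomposition with the boundary data encoded by $B$ should identify the relevant subspace of closed 3-forms as precisely the image of $(d,B)$, so that obstructions vanish. This unobstructedness step is the principal obstacle, being substantially more delicate than in McLean's closed case and requiring careful interplay between the scaffold geometry, the relative de Rham cohomology of $(N,\p N)$, and the precise form of $B$.

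For the dimension bound, the tangent space to the moduli space at $N$ is the kernel: self-dual harmonic 2-forms $\alpha$ on $N$ with $B\alpha=0$. In a boundary collar with metric $\d t^2+g_t$, self-duality forces the decomposition $\alpha=\d t\we\beta_t+*_{g_t}\beta_t$ for some $t$-dependent 1-form $\beta_t$ on $\p N$. The equations $\d\alpha=\d^*\alpha=0$ then yield $\d_3^*\beta_0=0$ at $t=0$, and the boundary condition $B\alpha=0$ should supply the complementary $\d_3\beta_0=0$, so that $\beta_0:=\beta|_{\p N}\in\HH^1(\p N)$. If $\beta_0=0$, one checks inductively using the $t$-derivatives of the harmonic equations that $\alpha$ vanishes to infinite order at $\p N$, and Aronszajn unique continuation for the elliptic operator $\d+\d^*$ on $N$ then forces $\alpha\equiv0$. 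This gives an injection from the kernel into $\HH^1(\p N)$, and hence $\dim\leq b^1(\p N)$.
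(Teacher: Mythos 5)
Your overall strategy (parameterize deformations by self-dual $2$-forms, set up an elliptic boundary value problem, apply the implicit function theorem, and bound the kernel by an injection into $\mathcal{H}^1(\p N)$) is the right shape, and your kernel analysis in the collar ($\alpha=\d t\w\beta_t+*_t\beta_t$, $\beta_0$ harmonic on $\p N$, injectivity when $\beta_0=0$) essentially matches the paper's, except that you invoke Aronszajn unique continuation where the simpler uniqueness statement for the Dirichlet problem (or the result of Cappell--DeTurck--Gluck--Miller: a closed self-dual $2$-form vanishing on $\p N$ vanishes identically) already suffices.

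The genuine gap is in the middle: you propose a \emph{first-order} problem $F(\alpha)=0$, $F'(0)=d$, with a linear pointwise boundary operator $B$ to be checked against Lopatinski--Shapiro. This is precisely the step that fails, and the paper's central point is that it must fail. For a self-dual $2$-form written near $\p N$ as $\alpha=\xi_s\w\d s+*_s\xi_s$, the tangential and normal boundary components determine each other ($\alpha|_{\p N}=*_{\p N}(\bfn\lrcorner\alpha)$), so the Dirichlet and Neumann conditions coincide and force $\alpha$ to vanish at every boundary point; combined with $d\alpha=0$ this kills $\alpha$ entirely. There is no local algebraic boundary condition that is both elliptic for the (overdetermined) first-order operator $d|_{\Omega^2_+}$ and compatible with the geometric constraint that $\p N$ move in $S$. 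The paper's resolution is to pass to the \emph{second-order} operator $G=d^*_+\circ\hat F$, with linearization $d^*_+d$, together with boundary conditions $\bfn\lrcorner d\alpha=0$ and $d_{\p N}(\bfn\lrcorner\alpha)=0$ that are differential conditions along $\p N$ (equivalent, via $d\alpha=0$, to requiring $\bfn\lrcorner\alpha$ to be a harmonic $1$-form on $\p N$, i.e.\ an infinitesimal special Lagrangian deformation of $\p N$ in $S$); the Fredholm property and surjectivity come from Schwarz's Hodge-theoretic boundary value results plus Hodge theory on $\p N$, not from a Shapiro--Lopatinski verification. Relatedly, the unobstructedness step you flag as ``the principal obstacle'' is resolved in the paper not by a Hodge decomposition of closed $3$-forms, but by the exactness of $\hat F(\alpha)$ (McLean's argument) together with the vanishing of its normal component at $\p N$, which allows an integration by parts showing $d^*_+\hat F(\alpha)=0\Rightarrow\hat F(\alpha)=0$. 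Without replacing your first-order formulation by this second-order one (or something equivalent), the proposal does not go through.
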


\begin{thm}\label{thm2}
Let $\phi(t)$ be a smooth 1-parameter family of closed 3-forms defining
$\text{\emph{G}}_2$-structures on~$M$. Suppose that a compact submanifold
$N\subset M$ with boundary is coassociative with respect to the
$\text{\emph{G}}_2$-structure of $\phi(0)$ and the boundary $\p N$
is contained in a scaffold $S$.

If $\phi(t)|_N$ and the normal part of $\phi(t)|_{N}$ on $\p N$ are exact 
for all $t$ then $N$ can be extended to a smooth
family $N(t)$ for small $|t|$, with $N(0)=N$, such that $N(t)$ is
coassociative with respect to $\phi(t)$ and the boundary of $N(t)$ is
in~$S$.
\end{thm}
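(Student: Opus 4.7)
The plan is to reduce the theorem to a parameterized implicit function theorem, building on the elliptic machinery behind Theorem~\ref{thm1}. As in that proof, I would parameterize a neighborhood of $N$ in the space of compact 4-submanifolds of $M$ with boundary in $S$ by small sections $v$ of the normal bundle $\nu N\cong\Lambda^2_+T^*N$ that satisfy an appropriate boundary condition along $\p N$ encoding $\p N\subset S$. Writing $E_v:N\hookrightarrow M$ for the corresponding embedding, I would define the one-parameter coassociativity map
$$F:\mathcal V\times(-\eps,\eps)\longrightarrow \Omega^3(N),\qquad F(v,t)=E_v^*\phi(t),$$
on suitable H\"older or Sobolev function spaces, so that $F(v,t)=0$ together with the boundary condition on $v$ is equivalent to $E_v(N)$ being coassociative for $\phi(t)$ with boundary in $S$. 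At $t=0$ this is precisely the problem treated by Theorem~\ref{thm1}: $F(0,0)=0$, and $D_vF(0,0)$ is the McLean operator $\alpha\mapsto d\alpha$ on sections of $\Lambda^2_+T^*N$ with scaffold boundary data, which is Fredholm.

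The essential observation is that, since $\phi(t)$ is closed, $F(v,t)$ is always a \emph{closed} 3-form on $N$; and since $E_v$ is isotopic to the inclusion, $[F(v,t)]=[\phi(t)|_N]\in H^3(N)$, with an analogous identification for the relevant boundary cohomology class of the normal component on $\p N$. The two exactness hypotheses of the theorem assert precisely that these two cohomology classes vanish for every $t$, and so $F$ actually takes values in a linear subspace $\mathcal W_0\subset\Omega^3(N)$ cut out by these conditions. The next step is to show that $D_vF(0,0)$ surjects onto $\mathcal W_0$: the interior exactness hypothesis kills the $H^3(N)$-obstruction to solving $d\alpha=\eta$ with $\alpha\in\Omega^2_+(N)$, while the exactness of the normal part along $\p N$ handles the boundary cohomological obstruction coming from the coupling of $d$ with the scaffold boundary condition at $\p N$.

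Once surjectivity onto $\mathcal W_0$ has been established, the parameterized implicit function theorem---applied after splitting off the finite-dimensional kernel of $D_vF(0,0)$ by a choice of complement, as in the proof of Theorem~\ref{thm1}---produces a smooth curve $v(t)$ with $v(0)=0$ and $F(v(t),t)=0$ for all small $|t|$, yielding the required family $N(t)=E_{v(t)}(N)$ of coassociative submanifolds with boundary in $S$. The principal technical obstacle will be matching the cokernel of the elliptic boundary value problem exactly with the two de Rham cohomology groups controlled by the exactness hypotheses, so that the hypotheses cancel the obstructions to surjectivity; once this identification is in place, the standard elliptic bootstrap yields smoothness of the deformation.
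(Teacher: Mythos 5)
Your overall architecture agrees with the paper's: a parametric deformation map, an application of the implicit function theorem after splitting off the finite-dimensional kernel, and the observation that the two exactness hypotheses are exactly what kills the cohomological obstructions (the paper's proof of Theorem~\ref{extn-ca-b} does all of this, via a parametric version of the map $\widetilde G$ from the proof of Theorem~\ref{s4thm1}). However, there is a genuine gap at the central analytic step. You take the linearization to be the \emph{first-order} operator $\alpha\mapsto d\alpha$ on $\Omega^2_+(N)$ ``with scaffold boundary data'' and assert that it is Fredholm with image the exact forms $\mathcal W_0$. This is precisely the point the paper's construction is designed to circumvent: for a self-dual 2-form the Dirichlet and Neumann conditions coincide and force $\alpha|_{\p N}=0$, and a closed self-dual form vanishing on $\p N$ vanishes identically (the remark on page~\pageref{eight}, citing \cite{Cap}), so there is no well-posed first-order boundary value problem of the type you invoke. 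The paper instead passes to the second-order operator $G=d^*_+\circ\hat F$ together with the boundary data $d_{\p N}(\bfn\lrcorner\alpha)$ and $d_{\p N}(\alpha|_{\p N})$; Fredholmness and surjectivity of the linearization~\eqref{map} then come from Schwarz's Hodge theory for manifolds with boundary (Proposition~\ref{s3prop0}), and one recovers $\hat F(t,\alpha)=0$ from $G(t,\alpha)=0$ by integration by parts, which is exactly where the exactness of $\phi(t)|_N$ and of its normal part on $\p N$ enter (Proposition~\ref{extmap} and its adaptation). So the ``principal technical obstacle'' you defer---identifying the cokernel with the two de~Rham groups---is not a routine verification to be completed later; in the first-order formulation it is not clear it can be carried out at all, and resolving it is the main content of the argument.

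A secondary omission: you parameterize deformations by normal sections and a boundary condition ``encoding $\p N\subset S$,'' but with the exponential map of $g(\phi)$ there is no reason deformations should keep the boundary in $S$. The paper constructs a modified metric $\hat g$ for which $S$ is totally geodesic (Propositions \ref{newmetric}--\ref{s3prop}), so that every deformation in the chosen chart automatically has boundary in $S$ and the conditions in $\Gamma(\mathcal U_N)_{\text{bc}}$ are genuinely \emph{elliptic} boundary conditions (Lagrangian deformation of $\p N$ in $S$ and vanishing normal component of $\hat F$), not the constraint $\p N_\alpha\subset S$ itself. You would need some version of this to make your chart and your boundary conditions precise.
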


The principal ingredient in the proof of Theorems \ref{thm1} and~\ref{thm2}
is the construction of an appropriate boundary value problem with Fredholm
properties. For geometric reasons, the boundary value problem for
coassociative 4-folds with boundary \emph{cannot} be of a standard
Dirichlet or Neumann type (see remark on page~\pageref{eight}). Our study
of coassociative deformations leads to a boundary value problem of second
order and altogether quite different from that for the minimal Lagrangians
in~\cite{Butscher}, which is Neumann first order. 

We set-up the infinitesimal deformation
problem for coassociative submanifolds with boundary in a scaffold
in \S\ref{s4}, where we also study the Fredholm properties and give
a version of the Tubular Neighbourhood Theorem which is adapted to our needs. Then, in
$\S$\ref{s4}, we define a `deformation map' and apply the
Implicit Function Theorem to it in order to prove Theorems \ref{thm1} and~\ref{thm2}.  We
also briefly discuss some applications of the deformation theory in
$\S$\ref{examples}.

\begin{note}
Submanifolds are taken to be embedded, for convenience, since the
results hold for immersed submanifolds by simple modification of the
arguments given.
Smooth functions (and, more generally, sections of vector
bundles) on $N$ are understood as `smooth up to the boundary', i.e.\ near
$\p N$ these are obtainable as restrictions to $\p N\times[0,\eps_N)$
of smooth functions (sections) defined on $\p N\times(-\eps_N,\eps_N)$, via
the diffeomorphism~\eqref{collarn}.
\end{note}

\section{Coassociative 4-folds}\label{s2}

The key to defining coassociative
4-folds lies with the introduction of a distinguished 3-form on the
Euclidean space $\R^7$.

\begin{dfn}\label{s2dfn1} Let $(x_1,\ldots,x_7)$ be coordinates on $\R^7$ and write
$\d{\bf x}_{ij\ldots k}$ for the form $\d x_i\w \d x_j\w\ldots\w \d x_k$.
Define a 3-form $\varphi_0\in\Lambda^3(\RE^7)^*$ by:
\begin{equation}\label{s2eq1}
\varphi_0 = \d{\bf x}_{123}+\d{\bf x}_{145}+\d{\bf x}_{167}+\d{\bf
x}_{246}- \d{\bf x}_{257}-\d{\bf x}_{347}-\d{\bf x}_{356}.
\end{equation}
\end{dfn}
The Hodge dual of $\varphi_0$ is a 4-form given by:
\begin{equation*}\label{s2eq2}
\ast\varphi_0 = \d{\bf x}_{4567}+\d{\bf x}_{2367}+\d{\bf
x}_{2345}+\d{\bf x}_{1357}-\d{\bf x}_{1346}-\d{\bf x}_{1256}-\d{\bf
x}_{1247}.
\end{equation*}
The usual basis of $\R^7$ may be identified with the standard basis of
the cross-product algebra of pure imaginary octonions. Then
\begin{equation}\label{s2eq0}
\varphi_0(\mathbf{x},\mathbf{y},\mathbf{z})=
g_0(\mathbf{x}\times\mathbf{y},\mathbf{z})
\qquad\text{for any }\mathbf{x},\mathbf{y},\mathbf{z}\in\R^7,
\end{equation}
where $g_0$ denotes the Euclidean metric. 

The subgroup of $\GL(7,\R)$ preserving the cross-product is the Lie group
$\G2$, which is also a subgroup of $\SO(7)$, so $\G2$ is the stabilizer of
$\phi_0$ in the action of $\GL(7,\R)$. In light of this property, 
$\phi_0$ is sometimes called a `$\G2$ 3-form' on $\RE^7$; our choice of
expression~\eq{s2eq1} for $\varphi_0$ follows that of
\cite[Definition 11.1.1]{Joyce2}.
\begin{remark}
We note, for later use, that the stabilizer of a non-zero vector
$\mathbf{e}\in\RE^7$ in the action of $\G2$ is a maximal subgroup of~$\G2$
isomorphic to $\SU(3)$. Thus $\SU(3)$ is the stabilizer of a pair $(\omega,\Upsilon)$, for 
a 2-form $\omega=(\mathbf{e}\lrcorner\phi_0)|_{\mathbf{e}^\bot}$ and a 3-form
$\Upsilon=\phi_0|_{\mathbf{e}^\bot}$, in the action of $\GL(6,\RE)$ on
the orthogonal complement $\mathbf{e}^\bot\cong\RE^6$ (cf.~\cite{hitchin}).
\end{remark}
\begin{dfn}\label{s2dfn2} A 4-dimensional submanifold $P$
of\/ $\R^7$ is {\em coassociative} if and only if $\varphi_0|_P\equiv 0$.
\end{dfn}
The condition $\varphi_0|_P\equiv 0$ forces $*\varphi_0$ to be a
non-vanishing 4-form on $P$. Thus $*\varphi_0|_P$ induces a canonical
{\em orientation} on~$P$.

Definition~\ref{s2dfn2} is equivalent to the 
definition used in \emph{calibrated geometry} \cite{Harvey,Joyce2}. That is,
a coassociative 4-fold $P$ is a submanifold calibrated by $*\varphi_0$,
which means that $*\varphi_0|_P$ is the volume form for the Riemannian metric
induced by $g_0$ on~$P$ with the canonical orientation
\cite[Proposition 12.1.4]{Joyce2}. Every coassociative submanifold
of~$\RE^7$ is a {\em minimal} submanifold \cite[Theorem II.4.2]{HarLaw}
and, moreover, volume minimizing \cite[Theorem 7.5]{Harvey}.

So that we may describe coassociative submanifolds of more general
7-manifolds, we make two definitions following \cite[p.~7]{Bryant4}.

\begin{dfn}\label{s2dfn3}
Let $M$ be an oriented 7-manifold. A 3-form $\varphi$ on $M$ is
\emph{positive} if $\varphi(x)=\iota_x^*(\varphi_0)$ for all $x\in M$
for some orientation preserving linear isomorphism
$\iota_x:T_xM\rightarrow\R^7$, where $\phi_0$ is given in~\eqref{s2eq1}.
Denote the subbundle of positive \mbox{3-forms} on~$M$ by
$\Lambda^3_+T^*M \subset \Lambda^3T^*M$ and the fibre of this subbundle over
$x\in M$ by $\Lambda^3_+T^*_xM$. We write $\Omega^3_+(M)$ for the space of all
(smooth) positive 3-forms on~$M$.
\end{dfn}
For each $x\in M$, $\Lambda^3_+T^*_xM$ is the image of the open
$\GL_+(7,\R)$-orbit of $\phi_0$ in $\Lambda^3{(\RE^7)}^*$
under~$\iota_x^*$ given in the above definition. It follows that
$\Lambda^3_+T^*M$ is an open subbundle of $\Lambda^3T^*M$.

Since a positive 3-form $\phi$ is identified at each point in $M$ with the
3-form $\phi_0$ stabilized by $\G2$, it determines a $\G2$-structure on~$M$.
We shall sometimes simply say that $\phi\in\Omega^3_+(M)$ {\em is} a
$\G2$-structure on the oriented 7-manifold $M$.

Furthermore, as $\G2\subset \SO(7)$, we can uniquely associate to each
$\phi\in\Omega^3_+(M)$ a Riemannian metric $g=g(\phi)$ and the Hodge dual
4-form $*\varphi$ relative to the Hodge star of $g$. The triple
$(\varphi,*\varphi,g)$ corresponds to $(\varphi_0,*\varphi_0,g_0)$
at each point. 

\begin{dfn}[\mbox{\cite[pp.~228, 264]{Joyce2}}]
Let $M$ be an oriented 7-manifold. We call a $\G2$-structure
$\phi\in\Omega^3_+(M)$ \emph{torsion-free} if $\phi$ is closed
and coclosed with respect to the induced metric $g(\phi)$.

An \emph{almost} $\G2$-\emph{manifold} $(M,\varphi)$ is a 7-manifold
endowed with a $\G2$-structure $\phi$ such that $d\phi=0$.
If a $\G2$-structure $\phi$ is torsion-free then
$(M,\varphi)$ is called a $\G2$-\emph{manifold}.
\end{dfn}

\begin{remark}
By \cite[Lemma 11.5]{Salamon}, the holonomy of a metric $g$ on~$M$ is
contained in $\G2$ if and only if $g=g(\phi)$ for some torsion-free
$\G2$-structure $\phi$ on $M$. Manifolds with a closed $\G2$ 3-form are
important in the constructions of examples of compact irreducible
$\G2$-manifolds in \cite{Joyce1} and \cite{kov}.
\end{remark}

We are now able to give a more general version of Definition~\ref{s2dfn2}.

\begin{dfn}\label{s2dfn6}
Let $M$ be an oriented 7-manifold and $\phi\in\Omega^3_+(M)$ a $\G2$-structure
on~$M$. A 4-dimensional submanifold $P$ of\/ $M$ is {\em coassociative} if and
only if\/ $\varphi|_P\equiv 0$.
\end{dfn}

The deformation theory of compact coassociative submanifolds was studied by
McLean~\cite{McLean}. His results, summarized in Theorem~\ref{mclean} below,
were stated for $\G2$-manifolds but it was later observed in \cite{Goldstein}
that the proof does not use the coclosed condition on the $\G2$ 3-form~$\phi$.

By way of preparation, we note a standard consequence of the proof of the
Tubular Neighbourhood Theorem in \cite[Chapter IV, Theorem~9]{Lang}.
\begin{prop}\label{s3thm1}
Let $P$ be a closed submanifold
of a Riemannian manifold $M$.  There exist an open subset $V_P$ of
the normal bundle $\nu_{M}(P)$ of $P$ in $M$, containing the zero
section, and a tubular neighbourhood $T_P$ of $P$ in $M$, such that the
exponential map $\exp_{M}|_{V_P}:V_P\rightarrow T_P$ is a
diffeomorphism onto~$T_P$.
\end{prop}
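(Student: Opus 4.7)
The plan is to apply the standard Tubular Neighbourhood Theorem as in Lang. The proof proceeds by analysing the normal exponential map and using the Inverse Function Theorem together with the closedness of $P$ to secure injectivity on a full open neighbourhood of the zero section.

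First I would introduce the normal exponential map $E\colon \nu_M(P)\to M$ defined by $E(v)=\exp_{\pi(v)}(v)$, where $\pi\colon \nu_M(P)\to P$ is the bundle projection. At any zero-section point $(x,0)$ there is a canonical splitting $T_{(x,0)}\nu_M(P)\cong T_xP\oplus\nu_M(P)_x=T_xM$, and under this splitting $dE_{(x,0)}$ is the identity: the horizontal summand computes the differential of the inclusion $P\hookrightarrow M$, and the vertical summand computes $d(\exp_x)_0=\mathrm{id}$ on $\nu_M(P)_x$. The Inverse Function Theorem then provides, for each $x\in P$, an open neighbourhood $U_x$ of $(x,0)$ in $\nu_M(P)$ on which $E$ is a diffeomorphism onto $E(U_x)\subset M$.

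Second, I would glue these local statements into a global one. Using paracompactness of $P$, one can select a positive continuous function $r\colon P\to(0,\infty)$ such that the set $V=\{v\in\nu_M(P):\|v\|<r(\pi(v))\}$ is contained in $\bigcup_x U_x$; on such a $V$ the map $E$ is automatically a local diffeomorphism. It remains to shrink $r$ so that $E|_{V_P}$ becomes globally injective; setting $T_P=E(V_P)$ then finishes the proof since $T_P$ is automatically open by the local diffeomorphism property.

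The main obstacle is exactly this promotion from local injectivity to injectivity on an open neighbourhood of the entire zero section, and this is where the hypothesis that $P$ be closed in $M$ is used. Suppose no such $r$ exists; then one can produce sequences $v_n\neq w_n$ in $\nu_M(P)$ with $\|v_n\|,\|w_n\|\to 0$ and $E(v_n)=E(w_n)$. Writing $x_n=\pi(v_n)$, $y_n=\pi(w_n)$, we have $\dist_M(E(v_n),x_n)\le\|v_n\|\to 0$ and similarly for $y_n$, so the common image lies arbitrarily close to $P$. Closedness of $P$ in $M$, together with a local compactness argument, then allows one to pass to subsequences for which $x_n,y_n$ converge to a common limit $p\in P$; but for large $n$ both $v_n$ and $w_n$ would then lie in $U_p$, contradicting the local injectivity of $E$ there. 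This contradiction yields the required $V_P$ and completes the proof.
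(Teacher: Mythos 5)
The paper gives no argument of its own here: it records the statement as ``a standard consequence of the proof of the Tubular Neighbourhood Theorem in \cite[Chapter IV, Theorem 9]{Lang}''. Your proposal follows exactly that standard route (normal exponential map, identity differential along the zero section, Inverse Function Theorem, then promotion to global injectivity), so the comparison is really with the classical proof. Up to and including the local-diffeomorphism step your argument is fine, and the whole proof is correct in the case where $P$ is compact.

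The gap is in the final contradiction step, and it matters because ``closed'' here means closed as a subset of $M$, not compact: the proposition is later applied to the scaffold $S$, which is not assumed compact. From the failure of injectivity for every radius function you extract $v_n\neq w_n$ with $\|v_n\|,\|w_n\|\to 0$ and $E(v_n)=E(w_n)$, and then assert that closedness of $P$ plus local compactness lets you pass to subsequences with $x_n=\pi(v_n)$ and $y_n=\pi(w_n)$ converging to a common $p\in P$. But nothing confines $x_n$ to a compact set: the offending pairs may escape to infinity in $P$ even as the vector lengths shrink, and then no subsequence converges and no contradiction is reached. (The estimate $\dist_M(x_n,y_n)\le\|v_n\|+\|w_n\|\to 0$ does show the two base points approach each other, but not that either converges.) The standard repair is to localize first and patch afterwards: for each $p\in P$ run your contradiction argument with the additional constraint $x_n\in B(p,1/n)$, which forces $x_n\to p$ and hence $y_n\to p$, giving $\delta_p,\eps_p>0$ such that $E$ is injective on normal vectors of length $<\eps_p$ based in $B(p,\delta_p)\cap P$; then define a radius function such as $\rho(p)=\sup\{\delta\le 1: E\text{ is injective on vectors of length}<\delta\text{ based in }B(p,\delta)\cap P\}$, check that $\rho$ is $1$-Lipschitz (hence continuous and positive), and set $r=\rho/2$. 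If $E(v)=E(w)$ with $\|v\|<r(x)$, $\|w\|<r(y)$ and, say, $\rho(y)\le\rho(x)$, then $\dist_M(x,y)\le\|v\|+\|w\|<\rho(x)$, so both $v$ and $w$ lie in the injectivity domain attached to $x$, forcing $v=w$. With that replacement your proof is complete and agrees with the cited one.
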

The local deformations of~$P$ are understood as
submanifolds of the form $\exp_{\bf{v}}(P)$, where $\bfv$ is a
$C^1$-section of the normal bundle $\nu_M(P)$ and $\bfv$ is assumed
sufficiently small in the sup-norm. We shall call sections of $\nu_M(P)$ the
{\em normal vector fields} at~$P$.

Now suppose that $(M,\phi)$ is an almost $\G2$-manifold and a submanifold
$P\subset M$ is coassociative. Then the local deformations of~$P$ may
equivalently be given by self-dual 2-forms on~$P$ using an isometry of vector
bundles (cf.~\cite[Proposition 4.2]{McLean})
\begin{equation}\label{s2prop2}
\jmath_P:\mathbf{v}\in\nu_M(P)\to
(\mathbf{v}\,\lrcorner\,\varphi)|_P\in\Lambda^2_+T^*P;
\end{equation}
where $\Lambda^2_+T^*P$ denotes the bundle of self-dual 2-forms. The map
\begin{equation}\label{mapF}
F:\alpha\in\Omega^2_+(P)\to  \exp_{\bfv}^*(\varphi)\in\Omega^3(P),
\qquad  \bfv=\jmath_P^{-1}(\alpha),
\end{equation}
is defined for `small' $\alpha$, and $F(\alpha)=0$ precisely if
$\exp_{\bf{v}}(P)$ is a coassociative deformation.
\begin{thm}[\mbox{cf.~\cite[Theorem 4.5]{McLean}, \cite[Theorem 2.5]{joyce-salur}}]\label{mclean}
Let $(M,\varphi)$ be an almost $\text{\emph{G}}_2$-manifold
and $P\subset M$ a coassociative submanifold (not necessarily closed). 
\begin{itemize}
\item[\emph{(a)}] Then for each $\alpha\in\Omega^2_+(P)$, one has
$dF|_0(\alpha)=d\alpha$ and the 3-form 
$F(\alpha)$ (if defined) is exact.
\item[\emph{(b)}] If, in addition, $P$ is compact and without boundary then every closed
self-dual 2-form $\alpha$ on~$P$ arises as $\alpha=\jmath_P(\bfv)$, for some
normal vector field~$\bfv$ tangent to a smooth 1-parameter family of
coassociative submanifolds containing~$P$.  Thus, in this case, the space of
local coassociative deformations of~$P$ is a smooth manifold parameterized by
the space $\HH^2_+(P)$ of closed self-dual 2-forms on~$P$.
\end{itemize}
\end{thm}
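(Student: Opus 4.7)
The plan is to prove (a) by a direct Lie-derivative computation combined with the Cartan homotopy formula, and (b) by applying the Banach-space implicit function theorem after a Hodge-theoretic reduction of the map $F$.

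For (a), extend $\bfv=\jmath_P^{-1}(\alpha)$ to a vector field on a neighbourhood of $P$ in $M$ and write $\Phi_t(p)=\exp_{t\bfv(p)}(p)$, so $F(t\alpha)=\Phi_t^*\phi$. Differentiating at $t=0$ gives $\mathcal{L}_{\bfv}\phi|_P$; since $d\phi=0$, this equals $d(\bfv\lrcorner\phi)|_P=d\alpha$, because $(\bfv\lrcorner\phi)|_P=\alpha$ by definition of $\jmath_P$ and $d$ commutes with pull-back. For exactness of $F(\alpha)$, apply the Cartan homotopy formula to $\Psi:[0,1]\times P\to M$, $(t,p)\mapsto\exp_{t\bfv(p)}(p)$:
\[
F(\alpha)=\Psi_1^*\phi-\Psi_0^*\phi = d\!\int_0^1\!\Psi_t^*(\partial_t\lrcorner\phi)\,dt + \int_0^1\!\Psi_t^*(\partial_t\lrcorner d\phi)\,dt.
\]
The second integrand vanishes because $d\phi=0$, and $\Psi_0^*\phi=\phi|_P=0$ because $P$ is coassociative; hence $F(\alpha)$ is exact.

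For (b), realise $F$ as a smooth map between small neighbourhoods of $0$ in the H\"older spaces $C^{k+1,\beta}(\Lambda^2_+T^*P)$ and $C^{k,\beta}(\Lambda^3T^*P)$ for some $k\ge 1$. By (a) its image lies in the closed subspace of exact $3$-forms. On a compact closed oriented $4$-manifold, Hodge theory supplies an isomorphism $d:d^*\Omega^3\to d\Omega^2$ from co-exact $2$-forms onto exact $3$-forms; let $d_e^{-1}$ denote its inverse, a pseudodifferential operator of order $-1$. Setting $G=d_e^{-1}\circ F$, the equation $F(\alpha)=0$ is equivalent to $G(\alpha)=0$ near $0$, and $G$ is smooth between the corresponding H\"older spaces.

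For $\alpha\in\Omega^2_+$ with Hodge decomposition $\alpha=h+d\mu+d^*\nu$ one computes $dG|_0(\alpha)=d_e^{-1}(d\alpha)=d^*\nu$, the co-exact component of $\alpha$. Its kernel is $\{\alpha\in\Omega^2_+:d\alpha=0\}=\HH^2_+(P)$, of dimension $b^2_+(P)$, since a closed self-dual $2$-form on a closed oriented $4$-manifold is automatically harmonic. Surjectivity of $dG|_0$ onto $d^*\Omega^3$ reduces, via the elliptic complex $\Omega^0\xrightarrow{d}\Omega^1\xrightarrow{d_+}\Omega^2_+$ and the identification of the cokernel of $d_-:\Omega^1\to\Omega^2_-$ with the anti-self-dual harmonic $2$-forms, to solving $d_-\mu=-(h_-+\beta_-)$ for a given co-exact $\beta$, after choosing the anti-self-dual harmonic component of $h$ appropriately. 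The implicit function theorem then presents $G^{-1}(0)$ near $0$ as a smooth submanifold of dimension $b^2_+(P)$, parameterised by $\HH^2_+(P)$; elliptic regularity upgrades the $C^{k+1,\beta}$ solutions to smooth submanifolds.

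The principal difficulty lies in the Hodge-theoretic reduction in part (b): one must verify that $G$ is smooth between the Banach spaces and that its linearisation is a surjective Fredholm map with finite-dimensional kernel $\HH^2_+(P)$, which requires the elliptic-complex surjectivity argument outlined above. Part (a), by contrast, is a direct computation whose only geometric inputs are $d\phi=0$ and $\phi|_P=0$.
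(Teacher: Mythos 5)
The paper offers no proof of this theorem: it is quoted from McLean and Joyce--Salur (with the observation, attributed to Goldstein, that only $d\phi=0$ is needed), so there is nothing internal to compare against. Your argument is correct and is essentially the standard proof from those references: part (a) is the Cartan/homotopy-formula computation giving $dF|_0(\alpha)=d(\bfv\lrcorner\phi)|_P=d\alpha$ and exactness of $F(\alpha)$, and part (b) is the implicit function theorem applied after noting that $dF|_0=d:\Omega^2_+(P)\to d\Omega^2(P)$ is surjective (your reduction to solving $d_-\mu=-(h_-+\beta_-)$ modulo $\mathcal{H}^2_-$ is the standard proof that $d\Omega^2_+=d\Omega^2$ on a closed oriented 4-manifold) with kernel $\mathcal{H}^2_+(P)$; your composition with the order $-1$ operator $d_e^{-1}$ is merely one of several equivalent ways of obtaining a map between fixed Banach spaces with Fredholm surjective linearization --- the present paper, in its boundary setting in \S\ref{s4}, composes with $d^*_+$ instead.
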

\begin{remark}
Self-dual 2-forms on a compact manifold without boundary are closed precisely
if they are harmonic. By Hodge theory, the dimension of $\HH^2_+(P)$ is therefore 
equal to the dimension $b^2_+(P)$ of a maximal positive subspace for the
intersection form on~$P$. It is thus a topological quantity.
\end{remark}
Finally, there is a useful extension of Theorem~\ref{mclean} in the situation
where the $\G2$-structure is allowed to vary. This result is stated
in~\cite[Theorem 12.3.6]{Joyce2} and can be proved using the techniques
of~\cite[\S 4]{McLean}.
\begin{thm}\label{extn-ca}
Let $\phi(t)\in\Omega^3_+(M)$, $t\in\RE$, be a smooth path of
closed $\text{\emph{G}}_2$-structure forms on~$M$. Suppose that $P$ is a compact
submanifold of~$M$ without boundary such that $\phi(0)|_P=0$ and the form
$\phi(t)|_P$ is exact for each~$t$. 

Then there is an $\eps>0$ and, for each
$|t|<\eps$, a section $\bfv(t)$ of $\nu_M(P)$ smoothly depending on~$t$,
such that $\bfv(0)=0$ and $\phi(t)$ vanishes on the submanifold
$P(t)=\exp_{\bfv(t)}(P)$.
Thus $P(t)$ is a coassociative 4-fold in $\big(M,\phi(t)\big)$.
Here the normal bundle $\nu_M(P)$ and the exponential map are understood with
respect to the metric induced by $\phi(0)$.
\end{thm}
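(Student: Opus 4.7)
The plan is to adapt McLean's implicit-function-theorem proof of Theorem~\ref{mclean}(b) to a $t$-parameterised setting. Using the metric $g = g(\phi(0))$, the associated exponential map, and the bundle isomorphism $\jmath_P$ of \eqref{s2prop2}, define
$$F: (-\eps,\eps) \times U \to \Omega^3(P), \qquad F(t,\alpha) = \exp^*_{\jmath_P^{-1}(\alpha)}\phi(t),$$
where $U$ is a small neighbourhood of $0$ in a chosen Banach completion (say $C^{k,\beta}$) of $\Omega^2_+(P)$. A zero $F(t_0,\alpha_0) = 0$ is equivalent to $\exp_{\jmath_P^{-1}(\alpha_0)}(P)$ being coassociative for $\phi(t_0)$, so it suffices to produce a smooth curve $\alpha(t)$, with $\alpha(0) = 0$ and $F(t,\alpha(t)) \equiv 0$; then $\bfv(t) := \jmath_P^{-1}(\alpha(t))$ is the required normal section.

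Two preparatory observations set up the IFT. First, $F$ takes values in the closed subspace $d\Omega^2(P) \subset \Omega^3(P)$ of exact 3-forms: applying Cartan's magic formula to the homotopy $\{\exp_{s\bfv}\}_{s\in[0,1]}$ and using $d\phi(t) = 0$ gives $F(t,\alpha) - \phi(t)|_P = d\eta(t,\alpha)$ for a smooth $\eta$, while $\phi(t)|_P$ is exact by hypothesis. Second, the partial derivatives at $(0,0)$ are $\partial_\alpha F|_{(0,0)}(\alpha) = d\alpha$ by Theorem~\ref{mclean}(a) (applicable since $\phi(0)|_P = 0$) and $\partial_t F|_{(0,0)} = \dot\phi(0)|_P$, itself exact by hypothesis; hence the entire linearisation is valued in $d\Omega^2(P)$.

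To invoke the IFT I restrict the domain to the slice $(\HH^2_+(P))^\perp$, using the Hodge decomposition $\Omega^2_+(P) = \HH^2_+(P) \oplus (\HH^2_+(P))^\perp$. On this slice $\alpha \mapsto d\alpha$ is injective, and a short Hodge-theoretic computation shows that $d: \Omega^2_+(P) \to d\Omega^2(P)$ is surjective; hence $d: (\HH^2_+(P))^\perp \to d\Omega^2(P)$ is a Banach-space isomorphism in the chosen completions by elliptic regularity. The implicit function theorem then produces a smooth $\alpha(t) \in (\HH^2_+(P))^\perp$ with $\alpha(0) = 0$ and $F(t,\alpha(t)) = 0$ for small $|t|$; setting $\bfv(t) := \jmath_P^{-1}(\alpha(t))$ yields the claimed family, with $P(t) = \exp_{\bfv(t)}(P)$ coassociative for $\phi(t)$.

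The main technical obstacle is the functional-analytic bookkeeping: verifying that $F$ is smooth as a map between Banach spaces, that $d\Omega^2(P)$ is a closed complemented subspace of $\Omega^3(P)$, and that the restricted $d$ is a Banach isomorphism in the chosen completions. These are standard consequences of elliptic theory on the compact closed 4-manifold $P$, and with them in place the argument is essentially McLean's—the added $t$-parameter is harmless precisely because $\partial_t F|_{(0,0)}$ lies in the same target subspace $d\Omega^2(P)$ as the image of $\partial_\alpha F|_{(0,0)}$.
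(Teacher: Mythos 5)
Your argument is correct and is precisely the route the paper intends: the paper gives no proof of Theorem~\ref{extn-ca}, only the citation to \cite[Theorem 12.3.6]{Joyce2} and the remark that it ``can be proved using the techniques of \cite[\S 4]{McLean}'', and your parameterised implicit-function-theorem argument --- with the target restricted to exact 3-forms (which is where the exactness hypothesis on $\phi(t)|_P$ enters) and the domain restricted to the slice transverse to $\HH^2_+(P)$ --- is exactly that. The two points you leave compressed (surjectivity of $d\colon\Omega^2_+(P)\to d\Omega^2(P)$ via the ellipticity of $d^*_+d$ on self-dual forms, and elliptic regularity of solutions on the slice) are the standard ingredients of McLean's closed-manifold proof already invoked for Theorem~\ref{mclean}(b), so there is no gap.
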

\noindent Roughly speaking, this result says that compact coassociative 4-folds are `stable' under 
small variations of the ambient closed $\G2$-structure.

\section{The Infinitesimal Deformation Problem}\label{s3}

Throughout this section, $(M,\phi)$ is an almost $\G2$-manifold, $g=g(\phi)$
is the metric induced by~$\phi$ and $N$ is a compact coassociative submanifold
of $M$.

\subsection{{\boldmath $\SU(3)$}-structures on 6-dimensional submanifolds}
\label{6submfd}

In order to explain and motivate our choice of the boundary condition in the
next subsection, we recall some results on $\SU(3)$-structures and Calabi--Yau
geometry and their relation to $\G2$~geometry.

To begin, suppose that $S$ is an orientable 6-dimensional submanifold
of~$M$. Then the normal bundle of $S$ is trivial and, by the Tubular
Neighbourhood Theorem (Proposition \ref{s3thm1}), there exists a
neighbourhood $T_S$ of $S$ which is 
diffeomorphic to $S\times \{-\eps_S<s<\eps_S\}$, for some $\eps_S>0$, so
that $S=\{s=0\}$ and $\bfn_S=\frac\p{\p s}$ is a unit vector field on $T_S$,
with $\bfn_S|_S$ orthogonal to $S$. We shall sometimes call  $s$ the normal
coordinate near $S$. We can write
\begin{equation}\label{localphi}
\varphi|_{T_S}=\omega_s\w\d s+\Upsilon_s
\end{equation}
for some 1-parameter family of 2-forms and 3-forms $\omega_s$ and
$\Upsilon_s$ on $S$.

It is not difficult to see, from the remark following Definition \ref{s2dfn1}, 
that the forms $\omega_0=(\bfn_S\lrcorner\phi)|_S$ and
$\Upsilon_0=\phi|_S$ together induce an $\SU(3)$-structure (in general,
with torsion) on $S$. In particular, $S$ is oriented by
$$\frac{\omega_0^3}{3!}=\frac{1}{4}\Upsilon_0\we *_6\Upsilon_0$$ and has an induced almost 
complex structure $I$, compatible with the orientation, which may be given
by $I(\mathbf{u})=\bfn_S\times\mathbf{u}$ for all 
$\mathbf{u}\in T_xS\subset T_xM$.  Here we denoted by $*_6$ the Hodge star on
the 6-manifold $S$ with respect to the induced metric from $M$, and we used the
cross-product on $T_xM$ given by a $\G2$-invariant identification with the
standard $\G2$ 3-form on~$\RE^7$ as in Definition~\ref{s2dfn3}.  

The metric
induced on~$S$ is Hermitian with respect to~$I$ and its fundamental
$(1,1)$-form is $\omega$.  The non-vanishing complex 3-form
$\Omega_0=*_6\Upsilon_0-i\Upsilon_0$ has type $(3,0)$ relative to~$I$. 
We also note that, as $\d s$ is a unit 1-form, the point-wise model~\eqref{s2eq1}
for $\phi$ yields the relation
\begin{equation}\label{ric-flat}
\omega_0^3=\frac{3i}{4}\Omega_0\w\bar{\Omega}_0
\end{equation}
at each point in~$S$. 

Denote by $d_S$ the exterior derivative on~$S$.
\begin{lemma}\label{SLCA}
Let $S$ be an oriented 6-dimensional submanifold of an almost
$\text{\emph{G}}_2$-manifold $(M,\phi)$ and suppose that $d_S\,\omega_0=0$,
where $\omega_0\in\Omega^2(S)$ is defined in~\eqref{localphi}.
Let $N\subset M$ be a coassociative submanifold intersecting
$S$ in a 3-dimensional submanifold $L$ and such that $\bfn_S|_L$ is tangent
to~$N$. Then $\omega_0|_L=0$ and $\Upsilon_0|_L=0$.
\end{lemma}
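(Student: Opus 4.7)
The plan is to reduce both claims to the coassociative condition $\phi|_N\equiv 0$ using the decomposition~\eqref{localphi} of $\phi$ in the tubular neighbourhood $T_S$, together with the hypothesis that $\bfn_S|_L$ is tangent to~$N$. Pulling~\eqref{localphi} back to $S$ gives $\phi|_S=\Upsilon_0$, while contracting~\eqref{localphi} with $\bfn_S$ (noting that $\omega_s$ and $\Upsilon_s$ have no $\d s$-component) gives $(\bfn_S\lrcorner\phi)|_S=\omega_0$. Thus both $\omega_0$ and $\Upsilon_0$ are expressible at each point of $S$ via the ambient 3-form $\phi$.

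The pivotal step is a dimension count at an arbitrary point $x\in L$. Since $L=N\cap S$ is 3-dimensional, $T_xL\subset T_xN$. The vector $\bfn_S|_x$ lies in $T_xN$ by hypothesis but is orthogonal to $T_xS$, hence is not contained in $T_xL$. Comparing dimensions forces
\[
T_xN=T_xL\oplus\RE\,\bfn_S|_x.
\]
In particular, any triple of vectors chosen from $T_xL\cup\{\bfn_S|_x\}$ lies in $T_xN$.

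Both conclusions now follow by evaluating $\phi$ on such triples. For $\Upsilon_0|_L=0$: given $u,v,w\in T_xL\subset T_xN$, we have $\Upsilon_0(u,v,w)=\phi(u,v,w)=0$. For $\omega_0|_L=0$: given $u,v\in T_xL$, the identity $\omega_0(u,v)=\phi(\bfn_S|_x,u,v)$ evaluates $\phi$ on three vectors of $T_xN$, and so vanishes.

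There is no serious obstacle here; the argument is essentially a dimension count combined with the vanishing $\phi|_N\equiv 0$. I observe that the hypothesis $d_S\omega_0=0$ does not actually enter this proof. It is presumably retained because the natural setting for the lemma is a scaffold (Definition~\ref{s3dfn2}), whose Hermitian symplectic structure requires closedness of $\omega_0$ and will be needed in the subsequent deformation-theoretic arguments rather than in the point-wise vanishing proved here.
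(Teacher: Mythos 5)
Your proof is correct, and it takes a genuinely different --- and more elementary --- route than the paper's. You argue purely pointwise: identifying $\Upsilon_0=\phi|_S$ and $\omega_0=(\bfn_S\lrcorner\phi)|_S$ from \eqref{localphi}, noting that $T_xL\subset T_xN$ and $\bfn_S(x)\in T_xN$ for $x\in L$, and observing that every evaluation of $\omega_0$ or $\Upsilon_0$ on tangent vectors to $L$ is then an evaluation of $\phi$ on vectors in $T_xN$, hence zero. The paper disposes of $\Upsilon_0|_L=0$ in the same way, but for $\omega_0|_L=0$ it argues differentially: expanding $0=d\phi|_{T_S}$ gives $d_S\omega_s=\frac{\p}{\p s}\Upsilon_s$, the hypothesis $d_S\omega_0=0$ then yields $\frac{\p}{\p s}\Upsilon_s|_{s=0}=0$, and the conclusion follows by restricting \eqref{localphi} to a neighbourhood of $L$ in $N$ on which $\d s\neq 0$ (transversality). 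Your observation that $d_S\omega_0=0$ (and indeed $d\phi=0$) never enters your argument is accurate: the lemma's conclusion holds for any oriented $6$-dimensional $S$ with $\bfn_S|_L$ tangent to the coassociative $N$, so your version is marginally stronger. The closedness hypotheses are what drive the paper's chosen argument, and the identity $\frac{\p}{\p s}\Upsilon_s|_{s=0}=0$ obtained along the way is reused in \S\ref{s3subs1}, which is presumably why the authors prove the lemma as they do; your approach buys simplicity and generality at the cost of not producing that auxiliary identity.
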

\begin{pf}
The last assertion is clear as $\Upsilon_0|_L=\phi|_L$, $L\subset N$
and $\phi|_N=0$. As
$$
0=d\phi|_{T_S}=(d_S\omega_s-\frac\p{\p s}\Upsilon_s)\we \d s+d_S\Upsilon_s
$$
we find that $\frac\p{\p s}\Upsilon_s|_{s=0}=0$. The submanifolds $S$ and
$N$ intersect transversely, so $\d s$ never vanishes on a neighbourhood
of $L$ in $N$. Restricting~\eqref{localphi} to this neighbourhood we
obtain $\omega_0|_L=0$ as $\bfn_S$ is a normal vector field at~$S$.
\end{pf}
The property $\omega_0|_L=0$ means that $L$ is a Lagrangian
submanifold of a symplectic manifold $(S,\omega_0)$.  To explain the role
of the additional condition $\Upsilon_0|_L=\phi|_L=0$, we begin with the
following.
\begin{dfn}\label{dfn3.2}
An orientable 6-dimensional submanifold $S$ is a
\emph{symplectic submanifold} of the almost $\G2$-manifold $(M,\phi)$ if
$d_S\omega_0=0$, where $\omega_0=(\bfn_S\lrcorner\phi)|_S$ and $s$
is the normal coordinate near~$S$.

A 3-dimensional submanifold $L\subset S$ of a symplectic submanifold
$S\subset M$ is said to be \emph{special Lagrangian} if 
$\omega_0|_L=0$ and $\phi|_L=0$.
\end{dfn}
It is not difficult to see, using the pointwise model~\eqref{s2eq1}, that
every special Lagrangian submanifold of~$S$ is oriented by a
nowhere-vanishing 3-form $*_6\Upsilon_0=(\bfn_S\lrcorner*\phi)|_L$.

Definition~\ref{dfn3.2} extends the concept of special Lagrangian
submanifolds usually found in the literature to a more general
class of ambient manifolds, similar to~\cite{salur-SL}.  To
clarify this generalization we note the following.
\begin{prop}\label{CY}
Let $S$ be a symplectic submanifold of an almost $\text{\emph{G}}_2$-manifold.
The almost complex structure~$I$ on~$S$ is integrable if and only if
$d_S*_6\Upsilon_0=0$. Furthermore, in this case the K\"ahler metric defined
by $\omega_0$ is Ricci-flat.
\end{prop}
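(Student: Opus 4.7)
My plan is to translate the problem into the language of the $\SU(3)$-structure $(\omega_0,\Omega_0)$ on $S$, where $\Omega_0=*_6\Upsilon_0-i\Upsilon_0$ is the $(3,0)$-form compatible with the almost complex structure~$I$. The first step is to extract $d_S\Upsilon_0=0$ by expanding $d\phi=0$ via~\eqref{localphi}; the resulting identity $(d_S\omega_s-\frac{\p}{\p s}\Upsilon_s)\we\d s+d_S\Upsilon_s=0$ yields $d_S\Upsilon_s=0$ for all $s$ by separation of components. Combining with the definition of $\Omega_0$ then gives $d\Omega_0=d_S*_6\Upsilon_0$, which in particular is a \emph{real} 4-form on~$S$.

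Next I would carry out a type-decomposition argument. Since $\Omega_0$ has type $(3,0)$ and $\dim_\C S=3$, the only possible bidegree components of $d\Omega_0$ are $(3,1)$ and $(2,2)$. Reality of $d\Omega_0$ forces the $(3,1)$ component to vanish (its complex conjugate would live in $\Lambda^{1,3}$, which is disjoint from $\Lambda^{3,1}$), so $d\Omega_0$ lies purely in the real $(2,2)$-part. Hence the condition $d_S *_6 \Upsilon_0=0$ is equivalent to $d\Omega_0=0$.

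The critical step is to identify this $(2,2)$-component with the obstruction to integrability of $I$. The Newlander--Nirenberg theorem characterizes integrability as the vanishing of the tensorial component $\bar N\colon \Lambda^{p,q}\to\Lambda^{p-1,q+2}$ of $d$, equivalently the vanishing of the Nijenhuis tensor. Because $\Omega_0$ is a nowhere-vanishing section of the line bundle $\Lambda^{3,0}$, it locally factors as a product of three $(1,0)$-forms, and the three terms of $\bar N\Omega_0$ produced by the derivation property occupy type-distinct components of $\Lambda^{2,2}$; thus $\bar N\Omega_0=0$ forces $\bar N$ to vanish on $\Lambda^{1,0}$, and hence on all forms. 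This yields the equivalence: $I$ is integrable if and only if $d\Omega_0=0$, i.e.\ $d_S *_6\Upsilon_0=0$. The main care here is in verifying that testing $\bar N$ on the single form $\Omega_0$ really detects the full Nijenhuis tensor, which hinges on the nowhere-vanishing of $\Omega_0$ together with the specific complex dimension three.

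For Ricci-flatness, given $d\omega_0=0$ from the symplectic hypothesis and $I$ integrable from the first part, the metric is K\"ahler and $\Omega_0$ is a nowhere-vanishing holomorphic $(3,0)$-form, since $\bar\p\Omega_0$ equals the $(3,1)$-part of $d\Omega_0$, which we have shown to vanish. The pointwise normalization~\eqref{ric-flat}, $\omega_0^3=\tfrac{3i}{4}\Omega_0\we\bar\Omega_0$, expresses precisely that $\|\Omega_0\|$ is constant in the K\"ahler metric. Since on a K\"ahler manifold the Ricci form equals $-i\p\bar\p\log\|\Omega_0\|^2$ for any local holomorphic trivialization $\Omega_0$ of the canonical bundle, constancy of $\|\Omega_0\|$ immediately yields Ricci $=0$.
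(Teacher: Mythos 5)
Your argument is correct and follows essentially the same route as the paper: both first use $d\phi=0$ and \eqref{localphi} to get $d_S\Upsilon_0=0$, hence reduce the statement to $d_S\Omega_0=0\iff d_S*_6\Upsilon_0=0$, and then invoke the Hitchin-style argument that closedness of the nowhere-vanishing locally decomposable $(3,0)$-form $\Omega_0$ is equivalent to integrability of $I$, with Ricci-flatness following from \eqref{ric-flat} via constancy of $\|\Omega_0\|$. The paper delegates this last part to \cite[\S 2]{hitchin-SL}, whereas you supply the type-decomposition and Nijenhuis-tensor details explicitly (and correctly).
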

\begin{pf}
As $d\phi=0$ on $M$, we obtain from~\eqref{localphi} that $d_S\Upsilon_0=0$.  Thus,
$d_S\Omega_0=0$ if and only if $d_S*_6\Upsilon_0=0$. The proposition now follows by the arguments
in~\cite[\S 2]{hitchin-SL}; we omit the details.
\end{pf}
When $I$ is integrable the nowhere-vanishing $(3,0)$-form is automatically
holomorphic. A K\"ahler metric is Ricci-flat if and only if its restricted
holonomy group is contained in $\SU(3)$. A complex 3-fold
$(S,\omega_0,\Omega_0)$ endowed with the Ricci-flat K\"ahler metric
and a holomorphic $(3,0)$-form satisfying~\eq{ric-flat} is sometimes
called a \emph{Calabi--Yau 3-fold}.  Thus, a symplectic submanifold of an almost $\G2$-manifold is a natural
generalisation of a Calabi--Yau 3-fold by weakening the condition on the complex structure.

Deformations of compact closed special Lagrangian submanifolds in Calabi--Yau
manifolds are unobstructed: there is a theorem of a similar type to
Theorem~\ref{mclean} and due again to McLean~\cite[Theorem 3.6]{McLean}.
Salur \cite{salur-SL} showed that the integrability of the  
complex structure on the Calabi--Yau manifold is unnecessary for the
deformation theory result to hold.  Thus, the deformation theory
remains valid for symplectic submanifolds $S$ of an almost $\G2$-manifold
$(M,\phi)$. 

 The next result provides motivation for our choice of
boundary conditions for coassociative submanifolds later.
\begin{thm}[\mbox{cf.~\cite{salur-SL}}]\label{mclean-SL}  
Let $(S,\omega_0)$ be a symplectic submanifold of an almost
$\text{\emph{G}}_2$-manifold and let $L\subset S$ be a compact special
Lagrangian submanifold without boundary. Then
\begin{itemize}
\item[\emph{(a)}]\label{isometry}
$\jmath_L:\bfv\mapsto(\bfv\lrcorner\omega_0)|_L$ defines a vector bundle
isometry between the normal bundle of~$L$ in $S$, $\nu_S(L)$, and~$T^*L$.
\item[\emph{(b)}] The normal vector fields at~$L$ defining infinitesimal
special Lagrangian deformations correspond, via $\jmath_L$,
to closed and coclosed 1-forms on~$L$. Conversely, every closed and coclosed
1-form on~$L$ corresponds via $\jmath_L$ to a normal vector field $\bfv$
at~$L$ which is tangent to a path of special Lagrangian deformations of~$L$.
\end{itemize}
\end{thm}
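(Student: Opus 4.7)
My approach is to extend McLean's deformation argument for closed special Lagrangian submanifolds of Calabi--Yau 3-folds \cite{McLean} to the present symplectic submanifold setting, in the style of the proof of Theorem \ref{mclean}. Part (a) reduces to Lagrangian linear algebra: since $L$ is Lagrangian in $(S,\omega_0)$, the compatible almost complex structure $I$ restricts pointwise to a linear isomorphism $I : T_pL \to \nu_S(L)_p$. For $\bfv = Iu$ with $u \in TL$, the relation $\omega_0(X,Y) = g(IX,Y)$ gives $(\bfv \lrcorner \omega_0)|_L = -u^\flat$, and since $I$ and the musical isomorphism $\flat$ are pointwise isometries, $\jmath_L$ is a vector bundle isometry $\nu_S(L) \to T^*L$.

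For the forward direction of (b), I would linearise the two defining conditions along a smooth family $L_t = \exp_{t\bfv}(L) \subset S$, $\bfv \in \nu_S(L)$. Both $\omega_0$ and $\Upsilon_0$ are closed on $S$ (the former by hypothesis, the latter as observed in the proof of Proposition \ref{CY}), so Cartan's formula yields
\begin{equation*}
\tfrac{d}{dt}\Big|_{t=0} \exp_{t\bfv}^* \omega_0 = d(\bfv \lrcorner \omega_0)|_L = d\alpha, \qquad \tfrac{d}{dt}\Big|_{t=0} \exp_{t\bfv}^* \Upsilon_0 = d(\bfv \lrcorner \Upsilon_0)|_L,
\end{equation*}
where $\alpha = \jmath_L(\bfv)$. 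The remaining point is a pointwise $\SU(3)$-identity: in an adapted orthonormal frame $\{e_i, f_i = Ie_i\}$ with $\{e_i\}$ spanning $TL$, one expands the standard Hermitian model $\Omega_0 = \bigwedge_{i=1}^3 (e^i + if^i) = *_6\Upsilon_0 - i\Upsilon_0$ and verifies directly that $(\bfv \lrcorner \Upsilon_0)|_L = *_L \alpha$ (up to a universal sign). The infinitesimal deformation equations therefore become $d\alpha = 0$ and $d^*\alpha = 0$, i.e.\ $\alpha$ is harmonic.

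For the converse, I would adapt the implicit function theorem argument underlying Theorem \ref{mclean}(b). A standard homotopy-operator argument, exploiting $d_S\omega_0 = d_S\Upsilon_0 = 0$ together with $\omega_0|_L = \Upsilon_0|_L = 0$, shows that the pulled-back forms $\exp_{\bfv}^* \omega_0$ and $\exp_{\bfv}^* \Upsilon_0$ are exact on $L$ for all sufficiently small $\bfv$. In appropriate Banach completions (e.g.\ H\"older spaces), one then considers the smooth deformation map
\begin{equation*}
F : \alpha \longmapsto \bigl(\exp_{\bfv}^* \omega_0,\ \exp_{\bfv}^* \Upsilon_0\bigr) \in d\Omega^1(L) \oplus d\Omega^2(L), \qquad \bfv = \jmath_L^{-1}(\alpha),
\end{equation*}
defined on a neighbourhood of $0 \in \Omega^1(L)$. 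Its linearisation at $0$ is $\alpha \mapsto (d\alpha,\, \pm d*_L \alpha)$, which by Hodge theory on the closed oriented 3-manifold $L$ is surjective onto $d\Omega^1(L) \oplus d\Omega^2(L)$ with finite-dimensional kernel $\HH^1(L)$. The implicit function theorem then presents $F^{-1}(0)$ as a smooth submanifold of dimension $b^1(L)$ near $0$, with tangent space $\HH^1(L)$, and elliptic regularity promotes its elements to smooth special Lagrangian deformations. The main technical hurdle throughout is the pointwise $\SU(3)$-calculation identifying $(\bfv \lrcorner \Upsilon_0)|_L$ with $\pm *_L \alpha$: this is where the refined $\SU(3)$-structure on $S$ intervenes decisively and couples the second Lagrangian equation to the elliptic Hodge system; once it is in hand, the rest is routine Banach-space implicit function theorem and Hodge theory on closed 3-manifolds, as in McLean's original paper.
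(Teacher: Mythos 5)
Your proposal is correct and follows essentially the same route as the paper, which simply defers to the proof of the main theorem of \cite{salur-SL} --- itself the adaptation of McLean's special Lagrangian argument that you reconstruct: the pointwise $\SU(3)$ identity $(\bfv\lrcorner\Upsilon_0)|_L=\pm *_L\,\jmath_L(\bfv)$, exactness of the pulled-back forms via closedness of $\omega_0$ and $\Upsilon_0$ (the latter from $d\phi=0$), and the Implicit Function Theorem with Hodge theory on the closed 3-manifold $L$. Nothing in your outline requires integrability of $I$, which is precisely the point of Salur's generalisation, so the argument is sound as stated.
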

\begin{pf}
This is immediate from the proof of the main theorem in \cite{salur-SL}.
\end{pf}

\subsection{Boundary conditions}\label{s3subs1}

From now on we assume that a compact coassociative submanifold
$N\subset M$ has non-empty boundary $\partial N$.
To achieve the Fredholm property of our deformation problem for a
coassociative submanifold $N$ with boundary, we need to impose a condition
that the boundary is confined to move in a suitable submanifold of $M$.  We
call this submanifold a scaffold, borrowing the terminology
of~\cite[Definition 1]{Butscher}, where the deformations of minimal
Lagrangians with boundary are studied.

\begin{dfn}\label{s3dfn2}
We say that an orientable 6-dimensional submanifold $S$ of $M$ is a
\emph{scaffold}  
for $N$ if
\begin{itemize}
\item[(a)] $\p N\subset S$, $\bfn\in\nu_M(S)|_{\p N}$ and
\item[(b)] $S$ is a symplectic submanifold of~$(M,\phi)$.
\end{itemize}
\end{dfn}

The condition (a) implies that $\p N\subset S$ is special Lagrangian, by
Lemma~\ref{SLCA}. Then, by Theorem~\ref{mclean-SL}, (b) ensures that
special Lagrangian deformations of $\p N$ in $S$ are unobstructed with a
smooth moduli space. Thus it is suggestive to consider coassociative
deformations of $N$ which remain `orthogonal to~$S$', i.e.~$\bfn_S$ is
tangent to the deformation.

We shall always assume that
$\bfn=\bfn_S|_{\p N}\in C^{\infty}(TN|_{\partial N})$ is a unit
inward-pointing normal vector field on $N$ at $\partial N$. Respectively,
$s_{\bfn}=s|_{N}$ is a normal coordinate near the boundary of~$N$. That is,
a map
\begin{equation}\label{collarn}
(x,s_{\mathbf{n}})\mapsto \exp_N\big(s_{\mathbf{n}}\mathbf{n}(x)\big)
\end{equation}
is defined for all $(x,s_{\bfn})\in \p N\times [0,\epsilon_N)$ and gives a
diffeomorphism of $\p N \times[0,\epsilon_N)$ onto a collar neighbourhood
$C_{\p N}$ of $\p N$ in~$N$. In particular, $\bfn=\frac{\p}{\p s_{\bfn}}$.

Recall from Theorem~\ref{mclean} that infinitesimal deformations of $N$ are
given by closed self-dual 2-forms $\alpha$ on $N$.
On the collar neighbourhood $C_{\p N}=T_S\cap N$ of $\p N$ in $N$,
we can write a self-dual 2-form $\alpha$ as 
\begin{equation}\label{bdryform1}
\alpha|_{C_{\p N}}=\xi_{s}\w\d s+*_s\xi_s
\end{equation}
for a 1-parameter family of 1-forms $\xi_s$ on $\p N$, 
where $*_s$ denotes the Hodge star on the submanifold
$(\p N)\times\{s\}\subset C_{\p N}$  corresponding to a fixed value of~$s$.

For a self-dual 2-form $\alpha$ on $N$, we easily calculate,
using~\eqref{bdryform1} and restricting to  $C_{\p N}$,
that $d\alpha=0$ implies that
\begin{equation}\label{bdrycondn1}
d_{\p N}\xi_s+\frac{\p}{\p s}(*_s\xi_s)=0
\quad\text{and}\quad d_{\p N}*_s\xi_s=0.
\end{equation}
The infinitesimal special Lagrangian deformations correspond to closed and
coclosed 1-forms by Theorem~\ref{mclean-SL}. Therefore, $d\alpha=0$ leads
to infinitesimal special Lagrangian deformations of the boundary if and
only if
\begin{equation}\label{bdrycondn2}
\frac{\partial}{\partial s}(*_s\xi_s)|_{s=0}=0.
\end{equation}

Notice that 
\begin{equation}\label{bdryform2}
d_{\p N}(\bfn\lrcorner\alpha)=-d_{\p N}\xi_0.
\end{equation}
Hence, the equations 
\begin{equation}\label{bdrycondn3}
d\alpha=0\;\,\text{on $N$}\quad\text{and}\quad d_{\p N}(\bfn\lrcorner\alpha)=0\;\,\text{on $\p N$},
\end{equation}
by \eq{bdrycondn1}, are equivalent to $d\alpha=0$ with condition
\eq{bdrycondn2}.

We can describe the boundary condition corresponding to \eq{bdrycondn2}
for normal vector fields $\bfv=\jmath_N^{-1}(\alpha)$ at~$N$ directly as
follows. Recall that, in a neighbourhood $T_S$ of $S$,
$\varphi|_{T_S}=\omega_s\w\d s+\Upsilon_s$ with $\omega_0$ closed as $S$ is
a scaffold, hence $\frac{\partial\Upsilon_s}{\partial s}|_{s=0}=0$ as
$d\varphi=0$.  The equation \eq{bdrycondn2} that $\alpha$ satisfies is
equivalent to the condition
$\frac{\partial}{\partial s}(\bfv_s\lrcorner\Upsilon_s)|_{s=0}=0,$
whence
\begin{equation}\label{bdrycondn5}
\left.\frac{\partial \bfv_s}{\partial s}\right|_{s=0}\lrcorner\Upsilon_0=0\quad\text{on $\p N$.}
\end{equation}
where we have expressed $\bfv$ on the collar neighbourhood of $\p N$ as a 1-parameter family of vector fields on $\p N$.

\begin{remark}
For a general $2$-form $\tilde\alpha=\alpha_\tau+\alpha_\nu\we \d s$, 
the familiar Dirichlet and Neumann boundary conditions are given by,
respectively, $\alpha_\tau=0$ and $\alpha_\nu=0$. For a self-dual $\alpha$,
the two conditions are equivalent and force $\alpha$ and the corresponding
normal vector field $\jmath_N^{-1}(\alpha)$ to vanish at each point of 
$\p N$.\label{eight} However, if $d\alpha=0$ and $\alpha$ vanishes
on the boundary then $\alpha=0$ by \cite[Lemma 2]{Cap}.
This may be understood as an extension of \cite[Theorem IV.4.3]{HarLaw}, which
states that there is a locally unique coassociative submanifold containing any
real analytic 3-dimensional submanifold upon which $\varphi$ vanishes.
\end{remark}

For a coassociative submanifold without boundary, a self-dual 2-form is
closed if and only if it is harmonic, i.e. satisfies
$\Delta\alpha=d^*_+d\alpha=0$. When there is a non-empty boundary a
harmonic self-dual 2-form need not in general be closed. The following
lemma is a direct corollary of~\cite[Proposition 3.4.5]{Schwarz} proved by
integration by parts.

\begin{lemma}\label{equiv}
For $\alpha\in\Omega^2_+(N)$, \ $d\alpha=0$ if and only if
\begin{equation}
d^*_+d\alpha=0\;\,\text{on $N$}\quad\text{and}\quad
\bfn\lrcorner d\alpha=0\text{ on $\p N$,}\label{sys2}
\end{equation}
where $d^*_+=\frac12(d^*+*d^*):\Omega^3(N)\rightarrow\Omega^2_+(N)$
is the $L^2$-adjoint of $d|_{\Omega^2_+}$.
\end{lemma}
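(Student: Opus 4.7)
The forward direction is immediate: if $d\alpha=0$ then both $d^*_+ d\alpha = 0$ on $N$ and $\bfn \lrcorner d\alpha = 0$ on $\p N$ hold trivially. The content lies in the converse, which I would establish by a single integration by parts that realises $\int_N |d\alpha|^2$ as the sum of a bulk term controlled by $d^*_+ d\alpha$ and a boundary term controlled by $\bfn \lrcorner d\alpha$, so that the two hypotheses in \eqref{sys2} together force $d\alpha=0$.

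Concretely, starting from the pointwise identity
$$ d(\alpha \wedge * d\alpha) \;=\; |d\alpha|^2\,\mathrm{vol}_N \;-\; \langle \alpha, d^* d\alpha\rangle\,\mathrm{vol}_N, $$
which comes from the Leibniz rule together with $d^* = -*d*$ on the $3$-form $d\alpha$, Stokes' theorem yields
$$ \int_N |d\alpha|^2\,\mathrm{vol}_N \;=\; \int_N \langle \alpha, d^* d\alpha\rangle\,\mathrm{vol}_N \;+\; \int_{\p N} \iota^*(\alpha \wedge * d\alpha). $$
Because $\alpha$ is self-dual and $*$ is self-adjoint on $2$-forms, $\langle \alpha, * d^* d\alpha\rangle = \langle * \alpha, d^* d\alpha\rangle = \langle \alpha, d^* d\alpha\rangle$, so $\langle \alpha, d^*_+ d\alpha\rangle = \langle \alpha, d^* d\alpha\rangle$; hence $d^*_+ d\alpha = 0$ annihilates the bulk integral.

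For the boundary piece I would invoke the standard Hodge-star relation along a hypersurface, $\iota^*(* \gamma) = \pm *_{\p N}(\bfn \lrcorner \gamma)$, applied to $\gamma = d\alpha$; this rewrites the integrand pointwise as $\pm \langle \iota^* \alpha, \bfn \lrcorner d\alpha\rangle\,\mathrm{vol}_{\p N}$, which vanishes by the boundary hypothesis. Combining, $\int_N |d\alpha|^2 = 0$, and therefore $d\alpha = 0$.

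The step requiring the most care is the boundary computation: verifying in a collar adapted to $\bfn$ that $\iota^*(\alpha \wedge *d\alpha)$ really is a pointwise multiple of $\langle \iota^*\alpha, \bfn \lrcorner d\alpha\rangle$, rather than also involving tangential data of $\alpha$. Once that identification is in hand, the algebraic collapse from $d^* d\alpha$ to $d^*_+ d\alpha$ is automatic from $\alpha = *\alpha$, and the result is exactly the content of the cited \cite[Proposition 3.4.5]{Schwarz}.
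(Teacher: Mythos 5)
Your argument is correct and is exactly the integration-by-parts computation that the paper outsources to \cite[Proposition 3.4.5]{Schwarz}: the pointwise identity, the reduction from $d^*d\alpha$ to $d^*_+d\alpha$ via $\alpha=*\alpha$, and the identification of the boundary integrand with $\pm\langle\iota^*\alpha,\bfn\lrcorner d\alpha\rangle\,\mathrm{vol}_{\p N}$ all check out. No substantive difference from the paper's (one-line) proof.
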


Our next proposition relates the infinitesimal coassociative deformations
of $N$ to solutions of a Fredholm linear boundary value problem.
\begin{prop}\label{s3prop0}
For $p>1$ and $k\ge 1$, the map
\begin{multline}\label{map}
\alpha\in L^p_{k+1}\Omega^2_+(N)\to \big(d^*_+d\alpha,\;
d_{\p N}(\bfn\lrcorner\alpha),\; d_{\p N}(\alpha|_{\p N})\big)\in\\
L^p_{k-1}\Omega^2_+(N)\oplus
L^p_{k-\frac{1}{p}}\big(d\Omega^1(\p N)\oplus d\Omega^2(\p N)\big),
\end{multline}
is surjective. The kernel of~\eqref{map} consists of smooth forms and has dimension $b^1(\p N)$.
In particular, the map~\eqref{map} is Fredholm.
\end{prop}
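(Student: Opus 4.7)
The plan is to recognise $P\alpha := \bigl(d^*_+d\alpha,\, d_{\p N}(\bfn\lrcorner\alpha),\, d_{\p N}(\alpha|_{\p N})\bigr)$ as an elliptic boundary-value operator in the sense of Shapiro--Lopatinski, deduce Fredholmness from the general theory, and then pin down the kernel and cokernel separately.

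For ellipticity, $d^*_+d$ on self-dual 2-forms is a rescaling of the Hodge Laplacian, hence interior elliptic. I would verify the complementing condition by freezing coefficients at a point of $\p N$: the decaying solutions of the frozen ODE at a non-zero tangential covector $\tau$ take the form $\alpha(s) = (\xi \we \d s + *_0\xi)e^{-|\tau|s}$ with $\xi \in T^*\p N$, by self-duality. The symbolic boundary operators read $\xi\mapsto \tau\we\xi$ and $\xi\mapsto \tau\we *_0\xi$; the first forces $\xi = c\tau$, and substituting into the second gives $c|\tau|^2$ times the three-volume form, so $c=0$. Only $\alpha\equiv 0$ survives both conditions, so the complementing condition holds, and the standard theory of elliptic boundary-value problems yields the Fredholm statement together with elliptic regularity of the kernel.

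For the kernel, write $\alpha = \xi_s\we\d s + *_s\xi_s$ in the collar: then $\bfn\lrcorner\alpha = -\xi_s$ and $\alpha|_{\p N} = *_0\xi_0$, so the two boundary conditions become $d_{\p N}\xi_0 = 0$ and $d^*_{\p N}\xi_0 = 0$, i.e.\ $\xi_0\in\HH^1(\p N)$. The map $\alpha\mapsto -\bfn\lrcorner\alpha|_{\p N}$ is injective: if $\xi_0 = 0$ then $\alpha|_{\p N}\equiv 0$, and integration by parts gives
\[
\int_N|d\alpha|^2 = \int_N\langle\alpha,\,d^*_+d\alpha\rangle + \int_{\p N}\alpha\we *d\alpha = 0,
\]
forcing $d\alpha=0$; the lemma of Cap cited in the remark following Lemma~\ref{equiv} then yields $\alpha\equiv 0$. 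Hence $\dim\ker P\le b^1(\p N)$.

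Surjectivity is the crux, and I would establish it by showing that the $L^2$-cokernel vanishes. A triple $(\phi,\gamma_1,\gamma_2)$ orthogonal to the image gives $d^*_+d\phi = 0$ interior (by testing against $\alpha$ compactly supported in $N^\circ$), plus a boundary pairing identity valid for every admissible $\alpha$. Expanding $\phi$ in its own collar decomposition and integrating by parts on $\p N$ against the exact forms $\gamma_i$, this identity separates because in a test $\alpha$ the boundary datum $\xi_0$ and the normal jet $\p_s\xi_s|_{s=0}$ can be prescribed independently: varying the normal jet forces $\phi|_{\p N} = 0$, after which the same Stokes-and-Cap argument gives $\phi\equiv 0$, and then varying $\xi_0$ constrains $\gamma_1,\gamma_2$ in a way that, combined with their prescribed exactness, forces both to vanish. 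Once $P$ is onto, the remaining direction of the injection $\ker P\hookrightarrow\HH^1(\p N)$ follows by extending any $\mu\in\HH^1(\p N)$ to $\tilde\alpha := \tilde\mu\we\d s + *_s\tilde\mu$ in the collar (cut off outside), which already satisfies both boundary conditions, and subtracting a $P$-preimage of $(d^*_+d\tilde\alpha,0,0)$ to produce a kernel element with boundary datum $\mu$; this yields $\dim\ker P = b^1(\p N)$. The decisive obstacle is the cokernel step: the boundary pairing genuinely couples the two boundary conditions through self-duality, and disentangling it demands careful bookkeeping of tangential versus normal data, with the prescribed exactness of the $\gamma_i$ essential at the last step.
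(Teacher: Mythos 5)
Your strategy---verify the Shapiro--Lopatinski condition for the operator $P$, get Fredholmness from the general theory, then treat kernel and cokernel separately---is genuinely different from the paper's. The paper reduces everything to a single citation: by \cite[Theorem 3.4.10]{Schwarz} the problem $\Delta\alpha=\eta$ with prescribed tangential part $\alpha|_{\p N}$ and normal part $\bfn\lrcorner\alpha$ is uniquely solvable; when $\eta$ is self-dual and the data satisfy $\alpha|_{\p N}=*_{\p N}(\bfn\lrcorner\alpha)$ the solution is self-dual, since $*\alpha$ solves the same problem. Surjectivity of \eqref{map} then follows because Hodge theory on the closed $3$-manifold $\p N$ lets one prescribe $d_{\p N}\xi_\nu$ and $d_{\p N}(*_{\p N}\xi_\nu)$ independently, and the same existence-and-uniqueness gives a bijection $\alpha\mapsto\bfn\lrcorner\alpha$ between the kernel and $\HH^1(\p N)$. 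Besides being far more economical, this sidesteps a structural point you gloss over: your boundary operators take values in the spaces of \emph{exact} forms $d\Omega^1(\p N)\oplus d\Omega^2(\p N)$, which are not sections of vector bundles, and the naive count of boundary conditions (fibre rank $3+1=4$) exceeds the rank $3$ that a determined second-order system on $\Lambda^2_+$ admits; the ``standard theory'' therefore does not apply off the shelf to this target.

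Beyond that, there are two genuine gaps. First, the surjectivity argument---which you rightly call the crux---is only a sketch: the boundary pairing for $d^*_+d$ on self-dual forms is never written down, and the assertion that varying ``the normal jet'' and ``$\xi_0$'' decouples the identity is exactly what needs proof, since for a self-dual test form the tangential datum is tied to the normal datum by $\alpha|_{\p N}=*_0\xi_0$, $\bfn\lrcorner\alpha=-\xi_0$, so the two boundary traces are \emph{not} freely and independently variable in the naive sense. Second, the lower bound $\dim\Ker P\ge b^1(\p N)$ fails as written: a $P$-preimage $\beta$ of $(d^*_+d\tilde\alpha,0,0)$ is constrained only to have $d_{\p N}(\bfn\lrcorner\beta)=0$ and $d_{\p N}(\beta|_{\p N})=0$, i.e.\ $\bfn\lrcorner\beta|_{\p N}$ is harmonic but not necessarily zero, so $\tilde\alpha-\beta$ lies in $\Ker P$ with boundary datum $\mu$ plus an uncontrolled element of $\HH^1(\p N)$; this says nothing about the image of $\Ker P\to\HH^1(\p N)$. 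Repairing it requires a preimage with \emph{vanishing} boundary data, which is precisely the unique solvability of the Dirichlet-type problem that the paper quotes from Schwarz---at which point one may as well run the paper's argument from the start.
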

\begin{pf}
By~\cite[Theorem 3.4.10]{Schwarz}, for each
$\eta\in\Omega^2(N)$, the solution $\alpha$ of $\Delta\alpha=\eta$ on~$N$
exists and is uniquely determined by its tangential
$\alpha|_{\p N}=\xi_\tau\in\Omega^2(\p N)$ and normal
$\bfn\lrcorner\alpha=\xi_\nu\in\Omega^1(\p N)$
components at the boundary. Now if $\eta\in\Omega^2_+(N)$, 
$\xi_\tau=*_{\p N}\xi_\nu$ and $\alpha$ is a solution of the latter
boundary value problem, then so is $*\alpha$, so $\alpha$ is self-dual
by uniqueness. Thus $\alpha\in\Omega^2_+(N)$ is uniquely
determined in this case by $\Delta\alpha$ and $\alpha|_{\p N}$.
As the manifold $\p N$ is compact and without boundary, it now follows from
the Hodge theory on $\p N$ that the values of 
$d_{\p N}(\bfn\lrcorner\alpha)$ and $d_{\p N}(\alpha|_{\p N})$ can be
prescribed independently and the operator~\eqref{map} is surjective.
These values are both zero precisely when $\bfn\lrcorner\alpha$
is a harmonic 1-form on $\p N$, which gives the dimension $b^1(\p N)$ of
the kernel of~\eq{map}.

We see from \cite[Theorem 3.4.10]{Schwarz}, that solutions to
$\Delta\alpha=0$ with $\alpha=\psi$ on $\p N$ are smooth if $\psi$ is
smooth. A similar result holds if the first derivatives of $\alpha=\psi$ on $\p N$ are
smooth, so the kernel of~\eq{map} consists of smooth forms. 
\end{pf}
In light of the work in this section, we shall be interested in the
following subspace of self-dual 2-forms
\begin{equation}\label{hsdbc}
\Omega^2_+(N)_{\text{bc}}
=\{\alpha\in 
\Omega^2_+(N):\;
\bfn\lrcorner d\alpha=0\text{ and }d_{\p N}(\bfn\lrcorner\alpha)=0
\text{ on $\p N$}\}.
\end{equation}
Our next result follows immediately from Lemma \ref{equiv} and Proposition \ref{s3prop0}.
\begin{cor}\label{subspace}
The image of $L^p_{k+1}\Omega^2_+(N)_{\mathrm{bc}}$ under the map~\eq{map}
is a closed subspace
\begin{equation}\label{target}
V^p_{k-1}\subseteq L^p_{k-1}\Omega^2_+(N)\oplus
L^p_{k-\frac{1}{p}}\big(d\Omega^1(\p N)\oplus
d\Omega^2(\p N)\big)
\end{equation}
The kernel of~\eq{map} intersects $L^p_{k+1}\Omega^2_+(N)_{\mathrm{bc}}$
in the subspace of (smooth) closed forms in $\Omega^2_+(N)_{\text{\emph{bc}}}$,
\begin{equation}\label{s3eq0}
(\mathcal{H}^2_+)_{\text{\emph{bc}}}=
 \big\{\alpha\in \Omega^2_+(N)\,:\,d\alpha=0\;\,\text{on $N$}
\text{ and }d_{\p N}(\bfn\lrcorner\alpha)=0\;\,\text{on $\p N$}\big\},
\end{equation}
and $\dim(\mathcal{H}^2_+)_{\text{\emph{bc}}}\le b^1(\p N)$.
\end{cor}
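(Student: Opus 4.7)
The plan is to derive all three assertions essentially by unpacking the definitions in light of Lemma~\ref{equiv} and Proposition~\ref{s3prop0}. The heart of the corollary is the kernel description~\eqref{s3eq0}, so I would establish it first. Suppose $\alpha\in L^p_{k+1}\Omega^2_+(N)_{\text{bc}}$ lies in the kernel of~\eqref{map}. Membership in $\Omega^2_+(N)_{\text{bc}}$ already gives $\bfn\lrcorner d\alpha=0$ on $\p N$, while the vanishing of the first component of~\eqref{map} gives $d^*_+d\alpha=0$ on $N$. By Lemma~\ref{equiv}, these two conditions together imply $d\alpha=0$, so $\alpha\in(\mathcal{H}^2_+)_{\text{bc}}$. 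The reverse inclusion is immediate: if $\alpha\in(\mathcal{H}^2_+)_{\text{bc}}$ then $d\alpha=0$, which forces $\bfn\lrcorner d\alpha=0$ (hence $\alpha\in\Omega^2_+(N)_{\text{bc}}$) as well as the vanishing of all three components of~\eqref{map} (the third being the tangential part of $d\alpha$). Smoothness of elements of $(\mathcal{H}^2_+)_{\text{bc}}$ is then inherited from the smoothness of $\ker\eqref{map}$ established in Proposition~\ref{s3prop0}.

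Given this identification, the dimension estimate is automatic: $(\mathcal{H}^2_+)_{\text{bc}}$ is a linear subspace of $\ker\eqref{map}$, whose dimension is exactly $b^1(\p N)$ by Proposition~\ref{s3prop0}.

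For the closedness of $V^p_{k-1}$ I would argue as follows. The defining conditions for $\Omega^2_+(N)_{\text{bc}}$ in~\eqref{hsdbc} are each obtained by composing $\alpha\mapsto d\alpha$ or the boundary trace with an interior contraction or tangential exterior derivative; each such composition is a bounded linear map between Sobolev spaces, so $L^p_{k+1}\Omega^2_+(N)_{\text{bc}}$ is the intersection of kernels of continuous linear maps and therefore closed in $L^p_{k+1}\Omega^2_+(N)$. Proposition~\ref{s3prop0} gives that~\eqref{map} is Fredholm with finite-dimensional kernel, in particular it has closed range. A standard property of bounded operators $T$ with closed range is that $T(Z)$ is closed for any closed subspace $Z$ whenever $Z+\ker T$ is closed; since $\ker\eqref{map}$ is finite-dimensional and the sum of a closed subspace with a finite-dimensional subspace is closed, this hypothesis is satisfied automatically with $Z=L^p_{k+1}\Omega^2_+(N)_{\text{bc}}$, yielding the closedness of $V^p_{k-1}$.

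The substantive input is entirely contained in Lemma~\ref{equiv} and Proposition~\ref{s3prop0}; the main conceptual point is simply that incorporating $\bfn\lrcorner d\alpha=0$ into the subscript $\text{bc}$ is precisely what converts the operator-theoretic condition $d^*_+d\alpha=0$ into the geometric condition $d\alpha=0$. The only mildly technical step is the closedness of $V^p_{k-1}$, which is a standard functional-analytic fact about restrictions of Fredholm operators to closed subspaces; no further analysis is required.
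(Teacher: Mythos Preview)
Your proposal is correct and follows exactly the route the paper indicates: the paper does not give a separate proof but states that the corollary ``follows immediately from Lemma~\ref{equiv} and Proposition~\ref{s3prop0},'' which is precisely what you unpack. Your added functional-analytic justification for the closedness of $V^p_{k-1}$ (via the finite-dimensionality of the kernel and the closedness of $L^p_{k+1}\Omega^2_+(N)_{\mathrm{bc}}$) is the standard argument and fills in a detail the paper leaves implicit.
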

\noindent An example when strict inequality
$\dim(\mathcal{H}^2_+)_{\text{bc}}<b^1(\p N)$ occurs
is given in~\S\ref{examples}.

\section{Coassociative Local Deformations}\label{s4}

In this section, like in \S\ref{s3}, $(M,\phi)$ is an almost $\G2$-manifold
and $N\subset M$ is a compact coassociative submanifold with boundary in a
scaffold $S$.

We shall now define a version of a \emph{deformation map} $G$ whose
linearization gives the linear problem set up in the previous section.  The
role of $G$ for the study of deformations of $N$ with boundary in~$S$ is
similar to the role of $F$ in~\eqref{mapF} for the closed coassociative
submanifolds. However, our deformation map modifies~\eqref{mapF} in two
ways. First, we use the exponential mapping $\widehat{\exp}$ of a metric
$\hat{g}$ defined in \S\ref{s3subs2} which in general is not the metric
$g(\phi)$ induced by the $\G2$-structure~$\phi$.
 Second, our non-linear differential operator
$G$ is of second order, with the derivative at zero given by~\eq{map}
restricted to $\Omega^2_+(N)_{\text{bc}}$, defined in~\eq{hsdbc}.
An application of the Implicit Function Theorem to~$G$ will show that
the space of local coassociative deformations of~$N$ is smooth and has
finite dimension equal to that of the space of closed forms $(\mathcal{H}^2_+)_{\mathrm{bc}}$ in $\Omega^2_+(N)_{\text{bc}}$.

\subsection{Adapted tubular neighbourhoods}\label{s3subs2}

We wish to parameterize nearby deformations of $N$ with boundary in the
scaffold~$S$ by normal vector fields (or self-dual 2-forms on~$N$) via an
exponential map.
In general, we cannot use, as in \S\ref{s2}, exponential deformations of $N$ given by 
$g(\phi)$, since the scaffold may
not be preserved under these deformations. Therefore, we shall define
on~$M$ a modified metric whose related exponential map does preserve the
scaffold; that is, the scaffold is totally geodesic with respect to the new
metric. A similar approach was previously used in~\cite{Butscher} for
minimal Lagrangian submanifolds with boundary.

We first describe the local structure of the almost $\G2$-manifold $M$
near~$S$, applying a tubular neighbourhood argument (cf.~Proposition~\ref{s3thm1}).
\begin{lemma}\label{s3cor1}
There exist $\epsilon_S>0$, an open neighbourhood $T_S$ of $S$ in $M$
and a diffeomorphism $\eta_S:S\times(-\epsilon_S,\epsilon_S)\rightarrow
T_S$, such that 
$\eta_S(x,0)=x$, and $d\eta_S|_{(x,0)}
:T_xS\times\R\rightarrow T_xM$  satisfies
$d\eta_S|_{(x,0)}(\mathbf{0},1)=\bfn_S(x)$
 for all $x\in S$, where $\bfn_S$ is the unit normal to $S$ in $M$ as
 in~\S\ref{6submfd}.
 Furthermore, $\eta_S$ can be chosen so that
$$\eta_S(x,s_{\mathbf{n}})=\exp_N\big(s_{\mathbf{n}}\mathbf{n}(x)\big)$$
for all $x\in \partial N$ and $s_{\mathbf{n}}\in [0,\epsilon_S)$.
\end{lemma}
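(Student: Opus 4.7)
The plan is to obtain $\eta_S$ as the flow along $S$ of an appropriately chosen transverse vector field on~$M$, arranged near $\p N$ so that its integral curves starting from $\p N$ coincide with $N$-geodesics. First, I would apply Proposition~\ref{s3thm1} to the pair $\p N\subset N$ to get the collar diffeomorphism $\Psi_N:\p N\times[0,\epsilon_1)\to C_{\p N}\subset N$ with $\Psi_N(x,s_{\bfn})=\exp_N(s_{\bfn}\bfn(x))$, which furnishes a distinguished tangent-to-$N$ vector field $\p/\p s_{\bfn}$ on $C_{\p N}$.

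Next, I would construct a smooth vector field $V$ on an open neighbourhood of $S$ in $M$ satisfying $V|_S=\bfn_S$ and $V=\p/\p s_{\bfn}$ on a small neighbourhood of $\p N$ inside $C_{\p N}$. The two prescriptions are compatible along $\p N=S\cap C_{\p N}$ because the scaffold condition (a) forces $\bfn_S|_{\p N}=\bfn$. Existence of such $V$ reduces to an extension problem across the transverse stratification $S\cup C_{\p N}$: in local coordinates near each point of $\p N$ in which $S$ and $N$ appear as coordinate subspaces (possible by transversality, since $\dim S+\dim N-\dim M=\dim\p N$), the vector field is prescribed on both strata consistently and extended to the ambient manifold, then patched globally by a partition of unity adapted to the stratification.

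Then I would define $\eta_S(x,s)=\Phi_s^V(x)$, the time-$s$ flow of $V$ starting at $x\in S$. Transversality of $V|_S=\bfn_S$ to $S$ makes $\eta_S$ a local diffeomorphism near $S\times\{0\}$, which becomes a diffeomorphism onto an open set $T_S$ after choosing $\epsilon_S>0$ sufficiently small, using compactness of $\p N$ for uniform flow times near the boundary. Verification of the three required properties is then immediate: $\eta_S(x,0)=x$ by the flow's initial condition; $d\eta_S|_{(x,0)}(\mathbf{0},1)=V(x)=\bfn_S(x)$ from $\p_s\Phi^V_s|_{s=0}=V$; and for $x\in\p N$, the integral curve of $V$ through $x$ satisfies $\dot\gamma=\p/\p s_{\bfn}$ along $C_{\p N}$, so it coincides with $s\mapsto\exp_N(s\bfn(x))$ by uniqueness of ODE solutions.

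The main obstacle is arranging that $V$ equals $\p/\p s_{\bfn}$ \emph{exactly} on a full open neighbourhood of $\p N$ inside $C_{\p N}$ rather than merely tangentially to~$N$: a naive convex combination of separate extensions of $\bfn_S$ and $\p/\p s_{\bfn}$ would preserve transversality to $S$ and tangency to $N$ but not the pointwise equality with $\p/\p s_{\bfn}$ needed to force the integral curves from $\p N$ to reproduce the $N$-exponential. The coordinate-based gluing across the transverse intersection of $S$ and~$N$ is what resolves this.
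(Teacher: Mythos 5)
Your proof is correct, but it reaches the tubular neighbourhood by a genuinely different mechanism than the paper. The paper starts from the map $\eta(x,s)=\exp_M\big(s\,\bfn_S(x)\big)$ supplied by Proposition~\ref{s3thm1}, which already satisfies the first two conditions, observes that $\eta^{-1}(N)$ is tangent to $\p N\times\{0\}\times\RE$ along $\p N\times\{0\}$, and then corrects $\eta$ by post-composing with a diffeomorphism $\psi$ of $S\times(-\eps,\eps)$ with $d\psi|_{(x,0)}=\id$ produced by an inverse function theorem argument, so that $\eta\circ\psi$ carries $(x,s_{\bfn})$ to $\exp_N(s_{\bfn}\bfn(x))$; the first two properties survive because $\psi$ is the identity to first order along $S\times\{0\}$. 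You instead build the map directly as the flow of a vector field $V$ interpolating between $\bfn_S$ on $S$ and $\p/\p s_{\bfn}$ on a collar of $\p N$ in $N$, which makes the third property an immediate consequence of uniqueness of integral curves, and pushes all the work into the extension of $V$ across the transverse union $S\cup C_{\p N}$. That extension does go through: in coordinates straightening $S$ to $\{w=0\}$ and $N$ to $\{v=0,\,w\ge 0\}$ (which exist by the transversality you note, since $\bfn_S|_{\p N}=\bfn\in TN$ forces $T_xS+T_xN=T_xM$), the prescription $f_S$ on $S$ and $f_N$ on $N$ with $f_S=f_N$ on $\p N$ extends componentwise by $f_S(u,v)+f_N(u,w)-f_S(u,0)$, and the partition-of-unity patching preserves both restrictions provided the collar is shrunk so that only charts respecting both strata are active near $\p N$ --- you correctly flag that a naive convex combination of separate extensions would fail here, which is the one genuine pitfall of this route. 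The trade-off is that the paper's argument is shorter but hides the matching of parametrizations (not just of submanifolds) inside the phrase ``standard inverse mapping argument,'' whereas yours makes that matching automatic at the cost of an explicit gluing lemma; both rely equally on the standard (and equally unexamined) uniformity of $\eps_S$ along the possibly non-compact $S$.
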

\begin{proof}
Recall that $\nu_M(S)$ is trivialised by $\mathbf{n}_S$. 
Therefore, from the proof of the Tubular Neighbourhood Theorem in~\cite{Lang}, 
we obtain $\epsilon>0$, an open neighbourhood $T$ of $S$ in $M$ and a
diffeomorphism $\eta:S\times (-\epsilon,\epsilon)
\rightarrow T$ given by
$$\eta(x,s_{\mathbf{n}})=
\exp_M\big(s_{\mathbf{n}}\mathbf{n}_S(x)\big).$$

The map $\eta$ satisfies all but the last of the required conditions, as
$\exp_M(s_{\bfn}\bfn)$ need not agree with $\exp_N(s_{\bfn}\bfn)$, even if
$0<\eps_S\le\eps_N$ (with $\eps_N$ as in the beginning of \S\ref{s3}).
However, since $T_{(x,0)}\eta^{-1}(N)=T_{(x,0)}\p N\oplus\RE\bfn$, 
a standard inverse mapping argument shows that, by composing $\eta$ with a
diffeomorphism $\psi$ of $S\times (-\epsilon,\epsilon)$ such that
$d\psi|_{(x,0)}=\id$, and choosing a sufficiently small $\eps_S>0$,  
we obtain $\eta_S:S\times(-\eps_S,\eps_S)\to T_S$ having all the required
properties.
\end{proof}
We can now construct the new metric.
\begin{prop}[cf.~\mbox{ \cite[Proposition 6]{Butscher}}]\label{newmetric}
There is a metric $\hat{g}$ on $M$, which equals $g$ outside of $T_S$, such that $S$ is totally geodesic with respect
 to $\hat{g}$.
\end{prop}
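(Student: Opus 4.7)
The plan is to modify $g$ only inside $T_S$ so that on a smaller tubular neighbourhood the metric has a product form making $S$ totally geodesic, then interpolate with $g$ via a cut-off so that the modification glues smoothly to $g$ on $M\setminus T_S$. The diffeomorphism $\eta_S$ of Lemma~\ref{s3cor1} is the obvious coordinate in which to do this, because $d\eta_S|_{(x,0)}(\mathbf{0},1)=\bfn_S(x)$ identifies $\p/\p s$ with the $g$-unit normal to $S$ along $S$ itself.

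First I would define on $S\times(-\epsilon_S,\epsilon_S)$ the product metric
\[
\tilde g \;=\; \pi^{*}(g|_S)\;+\;ds\otimes ds,
\]
where $\pi\colon S\times(-\epsilon_S,\epsilon_S)\to S$ is the first projection. With respect to $\tilde g$ each slice $S\times\{s_0\}$ is totally geodesic: horizontal Killing fields have vanishing $s$-derivative of their norms, and the Koszul formula gives $\tilde\nabla_X Y\in TS$ whenever $X,Y\in TS$. Next, observe that along $S\times\{0\}$ the tensors $\tilde g$ and $\eta_S^{*}g$ coincide as bilinear forms on $T(S\times\R)|_{S\times\{0\}}$: both restrict to $g|_S$ on horizontal vectors, both give $\p/\p s$ unit length (since $\bfn_S$ is the $g$-unit normal), and both make $\p/\p s$ orthogonal to $TS$ (the horizontal/vertical split is orthogonal for $\eta_S^{*}g$ at $s=0$ by the definition of the normal).

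Now pick a smooth cut-off $\chi\colon\R\to[0,1]$ with $\chi\equiv 1$ on $[-\epsilon_S/3,\epsilon_S/3]$ and $\chi\equiv 0$ outside $(-2\epsilon_S/3,2\epsilon_S/3)$, and set
\[
\hat g \;=\; (\eta_S)_{*}\!\bigl[\chi(s)\,\tilde g + (1-\chi(s))\,\eta_S^{*}g\bigr]
\quad\text{on }T_S,\qquad \hat g=g\ \text{on }M\setminus T_S.
\]
A convex combination of Riemannian metrics is Riemannian, so $\hat g$ is a well-defined smooth metric on $T_S$; since $\chi\equiv 0$ near $|s|=\epsilon_S$, the two definitions agree in an open neighbourhood of $\p T_S$ and $\hat g$ is smooth on all of $M$, equalling $g$ outside $T_S$ as required. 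Finally, on the smaller tube $\eta_S\bigl(S\times(-\epsilon_S/3,\epsilon_S/3)\bigr)$ we have $\chi\equiv 1$, so $\hat g=(\eta_S)_{*}\tilde g$ there; by the first step $\eta_S(S\times\{0\})=S$ is totally geodesic for this product metric, hence for $\hat g$.

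I do not anticipate a serious obstacle: the main point is just to recognise that a product metric makes the zero section totally geodesic and that convex combinations of Riemannian metrics remain Riemannian. The only detail that needs any care is checking that $\tilde g$ and $\eta_S^{*}g$ agree along $S\times\{0\}$, which is what guarantees that the interpolated metric is smooth where $\chi$ varies; this in turn is exactly the normalisation built into $\eta_S$ by Lemma~\ref{s3cor1}.
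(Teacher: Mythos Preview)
Your proof is correct and essentially identical to the paper's: both define the product metric $\pi^*(g|_S)+ds\otimes ds$ on $S\times(-\epsilon_S,\epsilon_S)$ and blend it with $g$ via a cut-off supported in $T_S$. One minor remark: the agreement of $\tilde g$ and $\eta_S^*g$ along $S\times\{0\}$ is not what guarantees smoothness of the interpolation (that follows automatically since $\chi$, $\tilde g$, and $\eta_S^*g$ are all smooth); rather, it ensures $\hat g|_S=g|_S$, which the paper does use later (Proposition~\ref{newnu}).
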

\begin{proof}
First recall Lemma \ref{s3cor1} and define a metric $h$ on $S\times (-\epsilon_S,\epsilon_S)$ by
$$h=\eta_S^*(g|_S)+\d s_{\mathbf{n}}\otimes\d s_{\mathbf{n}}.$$
Let $\chi:M\rightarrow[0,1]$ be a smooth function such that $\chi=0$ outside $T_S$ and $\chi=1$ in some tubular 
 neighbourhood
 of $S$ contained in $T_S$.  We then define $\hat{g}$ on $M$ by
$$\hat{g}=\chi(\eta_S^{-1})^*(h)+(1-\chi)g.$$
As in the proof of \cite[Proposition 6]{Butscher}, we see that $S$ is totally geodesic with respect to $\hat{g}$.
\end{proof} 
We shall need a variant of the isomorphism~\eqref{s2prop2} for the normal
bundle of $N$ with respect to $\hat{g}$.
\begin{prop}\label{newnu}
Let $\hat{\nu}_M(N)$ denote the normal bundle of $N$ relative to the metric
$\hat{g}$ given by Proposition \ref{newmetric}.
The map $\jmath_N:\hat{\nu}_M(N)\rightarrow(\Lambda^2_+)_gT^*N$ given by $\jmath_N(\mathbf{v})=(\mathbf{v}\lrcorner\varphi)|_{TN}$ is
an isomorphism.  Moreover, 
$\hat{\nu}_M(N)|_{\partial N}\subseteq TS|_{\partial N}$.
\end{prop}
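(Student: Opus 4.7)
The plan is to prove the two assertions separately. For the isomorphism, I would exploit the fact that $\jmath_N(\mathbf{v})=(\mathbf{v}\lrcorner\phi)|_{TN}$ is defined purely algebraically from $\phi$, independent of any metric; the choice of metric only enters in the definition of the domain. The key algebraic observation is that for any tangent vector $\mathbf{v}\in TN$, one has $(\mathbf{v}\lrcorner\phi)|_N=\mathbf{v}\lrcorner(\phi|_N)=0$ since $N$ is coassociative. Hence, viewing $\jmath_N$ as a bundle map $TM|_N\to\Lambda^2T^*N$, its kernel contains $TN$. From the McLean isomorphism \eqref{s2prop2}, $\jmath_N|_{\nu_M(N)}$ is an isomorphism onto $\Lambda^2_+T^*N$, so in fact $\ker(\jmath_N|_{TM|_N})=TN$ and the image equals $\Lambda^2_+T^*N$.

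Now the splitting $TM|_N=TN\oplus\hat\nu_M(N)$ (by definition of the $\hat g$-orthogonal complement) gives $\hat\nu_M(N)\cap\ker\jmath_N=\hat\nu_M(N)\cap TN=0$, so $\jmath_N|_{\hat\nu_M(N)}$ is injective and takes values in $\Lambda^2_+T^*N$. Since $\mathrm{rk}\,\hat\nu_M(N)=\dim M-\dim N=3=\mathrm{rk}\,\Lambda^2_+T^*N$, a dimension count makes it a bundle isomorphism.

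For the boundary inclusion, I would use the explicit structure of $\hat g$ near $S$. Shrinking the cutoff $\chi$ in the proof of Proposition \ref{newmetric} if necessary, I may assume $\chi=1$ on a neighbourhood of $\p N$, so that in this region $\hat g=(\eta_S^{-1})^\ast h$ with $h=\eta_S^\ast(g|_S)+\d s_\mathbf{n}\otimes\d s_\mathbf{n}$. In this product form, $\mathbf{n}_S$ is $\hat g$-orthogonal to $TS$ at every point of $S$. At $x\in\p N$, the scaffold condition (a) states that $\mathbf{n}=\mathbf{n}_S(x)$ is tangent to $N$, so we have the $\hat g$-orthogonal decomposition $T_xM=T_xS\oplus\R\mathbf{n}_S(x)$ with $T_xN=T_x\p N\oplus\R\mathbf{n}_S(x)$ and $T_x\p N\subset T_xS$. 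Any $\mathbf{v}\in\hat\nu_M(N)|_x$ must in particular be $\hat g$-orthogonal to $\mathbf{n}_S(x)$, which by the product form forces $\mathbf{v}\in T_xS$.

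The main obstacle I anticipate is a purely bookkeeping one: making sure the collar neighbourhood on which $\hat g$ has its clean product form is large enough to contain $\p N$, i.e.\ compatibly choosing the cutoff $\chi$ in Proposition \ref{newmetric}. Once that is arranged, both claims follow from pointwise linear algebra; no analysis is required.
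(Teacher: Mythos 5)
Your argument is correct and takes essentially the same route as the paper: the first claim is the same kernel/transversality argument built on McLean's isomorphism \eqref{s2prop2}, and the second claim reduces to the same pointwise orthogonal splitting $T_xM=T_xS\oplus\R\,\bfn_S(x)$ at $x\in\p N$ (the paper phrases this as ``$\hat g(x)=g(x)$ for $x\in\p N$'', you via the product form of $\hat g$ --- equivalent, since the two metrics agree along $S$). Your one anticipated obstacle is in fact vacuous: in the construction of Proposition~\ref{newmetric} the cutoff $\chi$ already equals $1$ on a neighbourhood of $S$, hence of $\p N\subset S$, so no shrinking is required.
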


\begin{proof} 
Recall from \eq{s2prop2} that $\jmath_N:\nu_M(N)\rightarrow(\Lambda^2_+)_gT^*N$ is an isomorphism.  Since 
$TM|_N=TN\oplus\nu_M(N)=TN\oplus\hat{\nu}_M(N)$, $\nu_M(N)\cong\hat{\nu}_M(N)$.  Moreover, the fibres of $\hat{\nu}_M(N)$ are 
transverse to those of $TN$ and $\jmath_N$ maps $TM|_N$ to $(\Lambda^2_+)_gT^*N$ since $\varphi$ vanishes on $TN$.  
So $\jmath_N$ defines an isomorphism between the vector bundles $\hat{\nu}_M(N)$
and $(\Lambda^2_+)_gT^*N$.

The final claims follows because $\hat{g}(x)$ coincides with $g(x)$ at
each \mbox{$x\in\p N$.}
\end{proof}
As a consequence of Propositions \ref{s3thm1}, \ref{newmetric} and \ref{newnu}
we obtain a version of the Tubular Neighbourhood Theorem which is adapted to local
deformations of~$N$ with boundary in~$S$.  Denote the exponential map on $M$
with respect to $\hat{g}$ by $\widehat{\exp}_M$.
However, we emphasise that the self-dual forms on~$N$ are always taken with
respect to the metric $g=g(\phi)$.  For an open subset $\mathcal{W}$ of a vector bundle $W$ on $N$, we define a
subset of the smooth sections $\Gamma(W)$ on $W$ by
$$\Gamma(\mathcal{W})=\{w\in \Gamma(W)\,:\,w(N)\subset \mathcal{W}\}.$$
We also make similar definitions for subsets of Banach spaces of sections when the Banach spaces consist of continuous sections.  
\begin{prop}\label{s3prop}
There exist an open subset $\mathcal{V}_N$ of $\hat{\nu}_M(N)$, containing the
zero section, and a 7-dimensional submanifold $\mathcal{T}_N$ of $M$ with
boundary, containing $N$, such that
$\widehat{\exp}_M:\mathcal{V}_N\rightarrow\mathcal{T}_N$
is a diffeomorphism such that if
$\mathbf{v}\in\Gamma(\mathcal{V}_N)$,
 then $\widehat{\exp}_M(\mathbf{v}(x))\in S$ for all $x\in\partial N$.

Respectively, $\mathcal{U}_N=\jmath_N(\mathcal{V}_N)$ is an open neighbourhood 
of the zero section in $\Lambda^2_+T^*N$ and $\delta_N=\widehat{\exp}_M\circ\jmath_N^{-1}:
\mathcal{U}_N\rightarrow\mathcal{T}_N$ is a diffeomorphism such that,  if
$\alpha\in\Gamma(\mathcal{U}_N)$, 
 then $\delta_N(\alpha(x))\in S$ for all $x\in\partial N$, so
$N_\alpha:=\delta_N(\alpha(N))\subset \mathcal{T}_N$ is a compact
4-dimensional submanifold of $M$ with boundary
$\partial N_{\alpha}\subset S$. 
\end{prop}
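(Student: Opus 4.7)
The plan is to derive the proposition as a direct consequence of the Tubular Neighbourhood Theorem (Proposition \ref{s3thm1}) applied with respect to the modified metric $\hat{g}$ constructed in Proposition \ref{newmetric}, combined with the two key properties already established: $S$ is totally geodesic for $\hat{g}$, and $\hat{\nu}_M(N)|_{\partial N}\subseteq TS|_{\partial N}$ by Proposition \ref{newnu}. Together these are precisely what force the boundary of an exponential deformation of $N$ to land in~$S$.

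First, I would apply Proposition \ref{s3thm1} to $N$ using the metric $\hat{g}$ (viewing $N$ as extending slightly across its boundary, which is permitted by the Note in \S\ref{s2}). This yields an open neighbourhood $V_N$ of the zero section in $\hat{\nu}_M(N)$ and an open set $T_N \subset M$ containing~$N$ such that $\widehat{\exp}_M|_{V_N}: V_N \to T_N$ is a diffeomorphism.

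Next, I would address the boundary behaviour. For each $x\in\partial N$, Proposition \ref{newnu} gives $\hat{\nu}_M(N)_x\subseteq T_xS$, while $x\in S$ and $S$ is totally geodesic with respect to $\hat{g}$. Hence the $\hat{g}$-geodesic emanating from $x$ with any initial velocity $\mathbf{v}(x)\in \hat{\nu}_M(N)_x$ remains in $S$ so long as it is defined, and in particular $\widehat{\exp}_M(\mathbf{v}(x))\in S$ provided $\mathbf{v}(x)$ is sufficiently small in $\hat{g}$-norm. Using compactness of $\partial N$, I would choose a uniform $\varepsilon>0$ so that this conclusion holds whenever $|\mathbf{v}(x)|_{\hat g}<\varepsilon$ for all $x\in\partial N$, and then shrink $V_N$ to $\mathcal{V}_N\subseteq V_N$ so that every vector in the fibers over $\partial N$ has length less than $\varepsilon$. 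Setting $\mathcal{T}_N=\widehat{\exp}_M(\mathcal{V}_N)$ then establishes the first half of the proposition.

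Finally, transferring to self-dual 2-forms is formal: by Proposition \ref{newnu}, $\jmath_N$ is a vector bundle isomorphism, so $\mathcal{U}_N:=\jmath_N(\mathcal{V}_N)$ is an open neighbourhood of the zero section in $\Lambda^2_+T^*N$, and $\delta_N=\widehat{\exp}_M\circ\jmath_N^{-1}:\mathcal{U}_N\to\mathcal{T}_N$ is a diffeomorphism as a composition of two. For $\alpha\in\Gamma(\mathcal{U}_N)$, writing $\mathbf{v}=\jmath_N^{-1}\circ\alpha\in\Gamma(\mathcal{V}_N)$ gives $\delta_N(\alpha(x))=\widehat{\exp}_M(\mathbf{v}(x))\in S$ for every $x\in\partial N$, so $N_\alpha=\delta_N(\alpha(N))$ is a compact 4-dimensional submanifold with $\partial N_\alpha\subseteq S$. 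The only substantive ingredient in the entire argument is the combination of the totally geodesic property of $S$ with $\hat{\nu}_M(N)|_{\partial N}\subseteq TS|_{\partial N}$ to confine the boundary to~$S$; everything else reduces to the standard tubular neighbourhood construction, uniform smallness by compactness of $\partial N$, and a formal transfer along $\jmath_N$.
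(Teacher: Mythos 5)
Your argument is correct and is precisely the reasoning the paper intends: the proposition is stated there without an explicit proof, merely as ``a consequence of Propositions \ref{s3thm1}, \ref{newmetric} and \ref{newnu}'', and your combination of the tubular neighbourhood construction for $\hat{g}$, the totally geodesic property of $S$, and $\hat{\nu}_M(N)|_{\partial N}\subseteq TS|_{\partial N}$ (with a uniform shrinking by compactness of $\partial N$ and the formal transfer along $\jmath_N$) is exactly how that consequence is obtained. The only nitpick is that the Note about smoothness up to the boundary appears in the Introduction rather than in \S\ref{s2}.
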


\subsection{The deformation map}

\begin{dfn}\label{s3dfn}\label{s4dfn2}
Let $\jmath_N$, 
 $\widehat{\exp}$, $\mathcal{V}_N$ and $\mathcal{U}_N$ be
as defined in~\S\ref{s3subs2}. Denote
\begin{equation}\label{coassoc}
\hat{F}:\alpha\in\Gamma(\mathcal{U}_N)\to 
\widehat{\exp}_{\bfv}^*\big(\varphi|_{N_{\alpha}}\big)\in\Omega^3(N),
\end{equation}
where $\bfv=\jmath_N^{-1}(\alpha)\in\Gamma(\mathcal{V}_N)$. 
We shall 
call the second order non-linear
differential operator
\begin{equation}\label{Gmap}
G=d_+^*\circ\hat{F}:\Gamma(\mathcal{U}_N)\to\Omega^2_+(N)
\end{equation}
the {\em deformation map} for~$N$.
\end{dfn}
Notice that the argument in~\cite[p.~731]{McLean} proving
Theorem~\ref{mclean}(a) does not depend on the choice of metric for the
exponential map, so we obtain the same result for $\hat{F}$ given
in~\eq{coassoc}.
\begin{lemma}\label{linearize} 
\begin{equation}\label{Flinear}
d\hat{F}|_0(\alpha)=d\alpha \quad\text{and}\quad
dG|_0(\alpha)=d_+^*d\alpha
\end{equation}
for all $\alpha\in\Gamma(\mathcal{U}_N)$.
\end{lemma}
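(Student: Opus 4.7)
The plan is to reduce both identities to a computation of the differential of a pullback at the identity, which is a Lie derivative, and then apply Cartan's formula together with $d\varphi=0$.

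First, I would consider the one-parameter family $\alpha_t = t\alpha$ and the corresponding normal vector fields $\bfv_t = t\bfv$, where $\bfv=\jmath_N^{-1}(\alpha)$. At $t=0$ the map $\widehat{\exp}_{\bfv_t}$ reduces to the inclusion $N\hookrightarrow M$ and $N_{\alpha_t}=N$, so by the usual characterization of derivatives of pullbacks along a flow-like family of embeddings, we have
\begin{equation*}
d\hat F\big|_0(\alpha)=\left.\frac{d}{dt}\right|_{t=0}\widehat{\exp}_{t\bfv}^*\bigl(\varphi|_{N_{t\alpha}}\bigr)=\bigl(\mathcal{L}_{\bfv}\varphi\bigr)\big|_N,
\end{equation*}
where the Lie derivative is computed for any smooth extension of $\bfv$ to a neighbourhood of $N$ in $M$; the restriction to $N$ depends only on $\bfv$ along $N$. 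This step is exactly the content pointed to in the remark just before the lemma, and it is the place where the choice of metric defining the exponential is irrelevant: what matters is only that $\widehat{\exp}_{0}=\id_N$ and $\frac{d}{dt}\widehat{\exp}_{t\bfv}(x)|_{t=0}=\bfv(x)$, both of which hold.

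Next I would apply Cartan's magic formula,
\begin{equation*}
\mathcal{L}_{\bfv}\varphi=d(\bfv\lrcorner\varphi)+\bfv\lrcorner d\varphi,
\end{equation*}
and use the hypothesis that $(M,\varphi)$ is an almost $\G2$-manifold, so $d\varphi=0$. Hence $\mathcal{L}_{\bfv}\varphi=d(\bfv\lrcorner\varphi)$. Restricting to $N$ and using naturality of the exterior derivative under pullback by the inclusion gives
\begin{equation*}
\bigl(\mathcal{L}_{\bfv}\varphi\bigr)\big|_N = d_N\bigl((\bfv\lrcorner\varphi)|_N\bigr)=d_N\bigl(\jmath_N(\bfv)\bigr)=d\alpha,
\end{equation*}
which establishes the first equality in \eqref{Flinear}.

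The second equality then follows immediately: by definition $G=d_+^*\circ\hat F$, and since $d_+^*:\Omega^3(N)\to\Omega^2_+(N)$ is a linear differential operator it coincides with its own linearization, so by the chain rule
\begin{equation*}
dG\big|_0(\alpha)=d_+^*\bigl(d\hat F|_0(\alpha)\bigr)=d_+^* d\alpha,
\end{equation*}
as required. I do not anticipate a real obstacle here: the only point that warrants care is justifying that the derivative of $\widehat{\exp}_{\bfv}^*(\varphi|_{N_\alpha})$ at $\alpha=0$ really is a pure Lie derivative (rather than also picking up a variation of the domain $N_\alpha$), but since $\varphi$ is already a globally defined form on $M$ and we ultimately pull back to $N$, the $N_\alpha$-dependence is absorbed into the pullback by $\widehat{\exp}_{\bfv}$, exactly as in \cite[p.~731]{McLean}.
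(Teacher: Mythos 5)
Your proof is correct and takes essentially the same route as the paper, which simply invokes McLean's computation \cite[p.~731]{McLean} (the Lie derivative plus Cartan's formula with $d\varphi=0$ and $(\bfv\lrcorner\varphi)|_N=\jmath_N(\bfv)=\alpha$) and observes, as you do, that the argument only uses $\widehat{\exp}_0=\id$ and $\frac{d}{dt}\widehat{\exp}_{t\bfv}|_{t=0}=\bfv$, not the particular metric defining the exponential map. The second identity is, as you say, immediate from the chain rule since $d_+^*$ is linear.
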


Next we impose boundary conditions on the deformations $\p N$ in $S$, a
special Lagrangian submanifold.
By Theorem \ref{mclean-SL}, there exists an isomorphism $\jmath_{\p N}$ between $\hat{\nu}_S(\p N)$ 
and $T^*\p N$ (noting that we can use the normal bundle with respect to the metric $\hat{g}$).  From the neighbourhoods given in
Proposition \ref{s3prop}, one deduces that there exist open neighbourhoods 
$\mathcal{V}_{\p N}$ and $\mathcal{U}_{\p N}$ of the zero sections in $\hat{\nu}_S(\p N)$ and $T^*\p N$ respectively, 
with $\mathcal{U}_{\p N}=\jmath_{\p N}(\mathcal{V}_{\p N})$, and a tubular neighbourhood $T_{\p N}$ of $\p N$ in $S$ 
such that $\widehat{\exp}_S:\mathcal{V}_{\p N}\rightarrow \mathcal{T}_{\p N}$ is a diffeomorphism.  Define
\begin{equation}\label{FSL}
\hat{F}_{\p N}:
\beta\in\Gamma(\mathcal{U}_{\p N})
\to\\
\widehat{\exp}_{\bfv}^*(\bfn_S\lrcorner\varphi|_{\p N_\beta})
\in d\Omega^1(\p N),
\end{equation}
where $\bfv=\jmath_{\p N}^{-1}(\beta)$ and $\p
N_{\beta}=\widehat{\exp}_{\bf v}(\p N)$.  The kernel of
$\hat{F}_{\p N}$ characterises the Lagrangian
(but not necessarily special Lagrangian) local deformations of $\p N$ in~$S$.
The fact that $\hat{F}_{\p N}$ maps into the space of
forms claimed is a consequence of arguments in~\cite[\S 3]{McLean}
since the form $\bfn_S\lrcorner\varphi$ is exact near $\p N$ as it is closed
and vanishes on $\p N$. Define
\begin{equation}\label{gamma-bc}
\Gamma(\mathcal{U}_N)_{\mathrm{bc}}=\{\alpha\in\Gamma(\mathcal{U}_N)
\,:\,\hat{F}_{\p N}(\bfn\lrcorner\alpha)=0\text{ and }
\bfn\lrcorner \hat{F}(\alpha)=0\text{ on }\p N\}
\end{equation}
It follows from the deformation theory for $\p N$ in~$S$ and the work in
\S\ref{s3subs1} that, by taking completion in the appropriate Sobolev norm,
$L^p_{k+1}(\mathcal{U}_N)_{\mathrm{bc}}$ becomes a Banach submanifold of
$L^p_{k+1}\Omega^2_+(N)$ and its tangent space at $\alpha=0$ is
$L^p_{k+1}\Omega^2_+(N)_{\text{bc}}$ defined in~\eq{hsdbc}.

The next result shows that we can define coassociative local deformations with boundary in $S$ using a second order differential 
operator.
\begin{prop}\label{extmap}
For $\alpha\in\Gamma(\mathcal{U}_N)_{\text{\emph{bc}}}$, $G(\alpha)=0$ if and only
if $N_{\alpha}$ is coassociative.
For any coassociative deformation
$\alpha\in\Gamma(\mathcal{U}_N)_{\text{\emph{bc}}}$ defined by
$G(\alpha)=0$,  the local deformation of $\p N_{\bfn\lrcorner\alpha}\subset S$
is special Lagrangian.
\end{prop}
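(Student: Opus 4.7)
The ``if'' direction is immediate: $N_\alpha$ coassociative means $\phi|_{N_\alpha}=0$, whence $\hat F(\alpha)=0$ and $G(\alpha)=d^*_+\hat F(\alpha)=0$. For the converse, I set $\eta=\hat F(\alpha)\in\Omega^3(N)$ and aim to show $\eta=0$ from the hypotheses $d^*_+\eta=0$ on $N$ and $\bfn\lrcorner\eta=0$ on $\p N$, both drawn from~\eqref{gamma-bc}.

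First, I would observe that $\eta$ is exact. The Cartan-formula argument underlying Theorem~\ref{mclean}(a) applies verbatim to the homotopy $\Phi_t(x)=\widehat{\exp}(t\bfv(x))$, since it uses only $d\phi=0$ and does not care which metric underlies the exponential map. Second, I would show $d^*\eta=0$. The hypothesis $d^*_+\eta=0$ says the 2-form $d^*\eta$ is anti-self-dual, and a short computation on the 4-manifold then gives $d^*\eta=d(*\eta)$. Through the standard relation between Hodge stars on $N$ and on $\p N$ (in the spirit of the decomposition~\eqref{bdryform1}), the condition $\bfn\lrcorner\eta=0$ on $\p N$ says the pullback of the 1-form $*\eta$ to $\p N$ vanishes. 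Hence $d^*\eta$ is the exterior derivative of a 1-form with vanishing tangential part on $\p N$, so it represents the zero class in $H^2(N,\p N;\RE)$. Meanwhile $d^*\eta$ is both closed (it equals $d(*\eta)$) and coclosed (since $(d^*)^2=0$), with vanishing tangential part on $\p N$, hence a Dirichlet harmonic field; the Hodge--Morrey--Friedrichs decomposition on compact Riemannian manifolds with boundary (see \cite[Chapter~3]{Schwarz}) identifies such fields with $H^2(N,\p N;\RE)$, and this forces $d^*\eta=0$.

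In the third step, $\eta$ is now closed (being exact) and coclosed (just shown), hence harmonic, and satisfies the Neumann condition $\bfn\lrcorner\eta=0$ on $\p N$. The same Hodge theory identifies Neumann harmonic 3-forms with $H^3(N;\RE)$, and exactness of $\eta$ forces the class to be zero, so $\eta=0$. Consequently $\phi|_{N_\alpha}=0$ and $N_\alpha$ is coassociative. For the final assertion, by~\eqref{FSL} the condition $\hat F_{\p N}(\bfn\lrcorner\alpha)=0$ is exactly that $\omega_0$ vanishes on $\p N_{\bfn\lrcorner\alpha}\subset S$, i.e.\ $\p N_{\bfn\lrcorner\alpha}$ is Lagrangian in~$S$. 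Since $\bfv|_{\p N}\in TS|_{\p N}$ by Proposition~\ref{newnu} and $S$ is totally geodesic for $\hat g$, the $\hat g$-geodesics defining $\p N_\alpha$ stay in $S$ and coincide with those defining $\p N_{\bfn\lrcorner\alpha}$, so the two submanifolds agree and $\phi|_{\p N_{\bfn\lrcorner\alpha}}=\phi|_{\p N_\alpha}=0$. Both conditions of Definition~\ref{dfn3.2} hold, verifying the special Lagrangian property.

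The main obstacle is the Hodge-theoretic bookkeeping on a manifold with boundary: translating $\bfn\lrcorner\eta=0$ into a Dirichlet-type condition for $*\eta$ with correct signs, and then deploying the Hodge--Morrey--Friedrichs identifications of relative and absolute harmonic fields with $H^k(N,\p N;\RE)$ and $H^k(N;\RE)$ respectively. A secondary technical point is the identification $\p N_\alpha=\p N_{\bfn\lrcorner\alpha}$ in the last step, which rests on the compatibility of $\widehat{\exp}_M$ and $\widehat{\exp}_S$ on vectors tangent to $S$ along $\p N$, a consequence of the total geodesicity of $S$ for $\hat g$.
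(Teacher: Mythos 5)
Your proposal is correct and follows essentially the same route as the paper: exactness of $\hat{F}(\alpha)$ via the metric-independent homotopy argument, integration by parts using the boundary condition $\bfn\lrcorner\hat{F}(\alpha)=0$ to force $\hat{F}(\alpha)=0$, and the identification $\p N_\alpha=\p N_{\bfn\lrcorner\alpha}$ via total geodesicity of $S$ for $\hat{g}$ to obtain the special Lagrangian conclusion. Your two-stage Hodge--Morrey--Friedrichs argument (first killing $d^*\eta$ as a Dirichlet harmonic field, then $\eta$ as a Neumann harmonic field) is simply a valid, more explicit rendering of the ``integration by parts argument'' that the paper leaves to the reader.
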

\begin{proof}
It is clear that $N_\alpha$ is coassociative precisely if
$\hat{F}(\alpha)=0$. Therefore, we suppose that $G(\alpha)=0$ and
show that then $\hat{F}(\alpha)=0$.

We know that the 3-form $\hat{F}(\alpha)$ is exact on~$N$ by the work in
\cite[\S 4]{McLean} since $\varphi$ is exact near $N$ as it is closed and
vanishes on $N$. As the last condition in~\eq{gamma-bc} asserts the
vanishing of the normal component $\bfn\lrcorner\hat{F}(\alpha)$ at the
boundary of $N$, the integration by parts argument applies to show that
$\hat{F}(\alpha)=0$.

For the last part of the Proposition, recall that the metric $\hat{g}$
constructed in the proof of Proposition~\ref{newmetric} is a product metric
near $S$, independent of the normal coordinate~$s$, and the exponential map
for $\hat{g}$ has the same $s$-invariant property. It can be checked that
$\widehat{\exp}_{\bfv|_{\p N}}^*(\varphi|_{\p N_{\bfn\lrcorner\alpha}})=\hat{F}(\alpha)|_{\p N}$
(in the notation of~\eq{FSL}). The latter vanishes since $\hat{F}(\alpha)=0$,
thus the Lagrangian deformation $\bfn\lrcorner\alpha$ is in fact special
Lagrangian.
\end{proof}

To apply the Banach space version of the Implicit Function
Theorem we note the following, by application of \cite[Theorem
2.2.15]{baier} or~\cite[\S 2.2]{joyce-salur}.
\begin{prop}\label{s4prop1}
The map $G$ given in~\eq{Gmap} 
extends to a smooth map of Sobolev
spaces $G:L^p_{k+1}(\mathcal{U}_N)_{\text{\emph{bc}}}\to L^p_{k-1}\Omega^2_+(N)$,
for any $p>4$, $k\geq 2$.
\end{prop}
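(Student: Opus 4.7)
The plan is to exhibit $G = d^*_+ \circ \hat F$ as a second-order nonlinear differential operator whose nonlinearity has smooth coefficients, and then to invoke a standard composition theorem for Sobolev sections. For $\alpha \in \Gamma(\mathcal{U}_N)$, the pullback $\widehat{\exp}_{\bfv}^*(\varphi)$ with $\bfv = \jmath_N^{-1}(\alpha)$ depends smoothly and pointwise on $\bfv$ and its first derivative, since differentiating $x \mapsto \widehat{\exp}(x, \bfv(x))$ produces at worst the Jacobian of $\widehat{\exp}$ together with one derivative of $\bfv$, and the bundle map $\jmath_N^{-1}$ is smooth. One may therefore write $\hat F(\alpha)(x) = \Phi(x, \alpha(x), \nabla\alpha(x))$ for some smooth function $\Phi$ defined on the appropriate open subset of a jet bundle. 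This exhibits $\hat F$ as a first-order nonlinear differential operator on $\Gamma(\mathcal{U}_N)$; composition with the first-order linear operator $d^*_+$ then makes $G$ second-order nonlinear with smooth coefficients.

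Next, I would note the Sobolev embedding $L^p_{k+1}(N) \hookrightarrow C^{k+1 - 4/p}(N)$, valid on the compact 4-manifold $N$ with boundary. For $p > 4$ and $k \geq 2$ this places $\alpha$ and $\nabla\alpha$ in $C^1$, so the pointwise condition $\alpha(x) \in \mathcal{U}_N$ is preserved under $C^0$-small perturbations and extends the definition of $\Gamma(\mathcal{U}_N)$ to an \emph{open} subset $L^p_{k+1}(\mathcal{U}_N) \subset L^p_{k+1}\Omega^2_+(N)$ on which $\hat F$ makes sense.

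With these two observations in hand, the composition theorem of \cite[Theorem 2.2.15]{baier} (or the equivalent discussion in \cite[\S 2.2]{joyce-salur}) applies directly: a smooth nonlinear first-order differential operator induces a smooth map $L^p_{k+1} \to L^p_k$ between Sobolev sections whenever the relevant Sobolev space embeds continuously in $C^1$, which holds here since $k+1 - 4/p > 1$. Therefore $\hat F \colon L^p_{k+1}(\mathcal{U}_N) \to L^p_k\Omega^3(N)$ is smooth. Post-composing with the bounded linear operator $d^*_+\colon L^p_k\Omega^3(N) \to L^p_{k-1}\Omega^2_+(N)$ preserves smoothness, and restricting to the Banach submanifold $L^p_{k+1}(\mathcal{U}_N)_{\text{bc}} \subset L^p_{k+1}\Omega^2_+(N)$ (which was already identified before the proposition) yields the desired smooth extension.

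The only real obstacle is bookkeeping: one must verify that the hypotheses of the composition theorem are genuinely met, which amounts to checking the numerology $p > \dim N = 4$ together with $k \geq 2$ so that $L^p_k \hookrightarrow C^0$ and $L^p_{k+1} \hookrightarrow C^1$ via Morrey's inequality. No fresh estimates beyond these standard embeddings and the standard smoothness of Nemytskii-type operators on Sobolev spaces need to be produced.
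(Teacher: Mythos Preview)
Your proposal is correct and follows essentially the same approach as the paper: the paper proves this proposition simply by citing \cite[Theorem 2.2.15]{baier} and \cite[\S 2.2]{joyce-salur}, together with the remark that $L^p_{k+1}\hookrightarrow C^2$ for $p>4$, $k\ge 2$ on the 4-manifold~$N$. You have unpacked precisely what invoking those references entails---writing $\hat F$ as a smooth first-order operator, applying the Sobolev composition theorem, and post-composing with the bounded linear $d^*_+$---so your argument is a fleshed-out version of the paper's one-line citation.
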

\begin{remark} The conditions $p>4$ and $k\geq 2$ ensure that the map $G$ of Sobolev spaces in Proposition~\ref{s4prop1} is well-defined
since $L^p_{k+1}\hookrightarrow C^2$ in four dimensions, by the Sobolev Embedding Theorem.
\end{remark}

Recall the space $V^p_{k-1}$ defined in~\eq{target}. Let $\Pi$ denote the $L^2$-orthogonal projection
$L^p_{k-1}\Omega^2_+(N)\oplus L^p_{k-\frac{1}{p}}\big(d\Omega^1(\p N)\oplus
d\Omega^2(\p N)\big)\to V^p_{k-1}$ and define 
\begin{equation}\label{tildeGmap}
\widetilde{G}(\alpha)=\Pi\circ \big(G(\alpha),\hat{F}_{\p N}(\bfn\lrcorner\alpha),\bfn\lrcorner\hat{F}(\alpha)\big),
\end{equation}
where $\alpha\in L^p_{k+1}(\mathcal{U}_N)$ and $\hat{F}_{\p N}$ is given in~\eq{FSL}. 
As $L^p_k\Omega^2_+(N)_{\text{bc}}$ 
is the tangent space to $L^p_{k+1}(\mathcal{U}_N)_{\mathrm{bc}}$ at $\alpha=0$ and $G$ is
smooth, we find, by reducing the neighbourhood $\mathcal{U}_N$ if
necessary, that $\widetilde{G}(\alpha)=0$ if and only if
$\alpha\in L^p_{k+1}(\mathcal{U}_N)_{\mathrm{bc}}$ and $G(\alpha)=0$.

\begin{prop}\label{s4prop3}
Let $p>4$ and $k\ge 2$. If $\alpha\in L^p_{k+1}(\mathcal{U}_N)_{\text{\emph{bc}}}$
and $\widetilde{G}(\alpha)=0$ then $\alpha$ is smooth.
\end{prop}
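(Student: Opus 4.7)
The plan is to argue by a standard elliptic bootstrap applied to the nonlinear boundary-value problem $\widetilde G(\alpha)=0$, viewed as a small perturbation of the linear problem from Proposition~\ref{s3prop0} and Corollary~\ref{subspace}.

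First, by the remark immediately preceding the proposition, $\widetilde G(\alpha)=0$ is equivalent to $\alpha\in L^p_{k+1}(\mathcal{U}_N)_{\mathrm{bc}}$ together with $G(\alpha)=0$. Since $p>4$ and $k\ge 2$, Sobolev embedding places $\alpha$ in $C^{2,\gamma}$ for some $\gamma\in(0,1)$, so the integration-by-parts step used in the proof of Proposition~\ref{extmap} applies classically and yields $\hat{F}(\alpha)=0$ on~$N$; in particular $N_{\alpha}$ is already coassociative.

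Next, regard the system $G(\alpha)=d^*_+\hat{F}(\alpha)=0$ on $N$, supplemented by the boundary conditions $\hat{F}_{\p N}(\bfn\lrcorner\alpha)=0$ and $\bfn\lrcorner\hat{F}(\alpha)=0$ on $\p N$, as a quasilinear second-order boundary-value problem for $\alpha$ whose boundary operators are first-order in $\alpha$. By Lemma~\ref{linearize} and the computations of \S\ref{s3subs1}, its linearisation at $\alpha=0$ is the operator of Proposition~\ref{s3prop0} restricted to $L^p_{k+1}\Omega^2_+(N)_{\mathrm{bc}}$. The Fredholm property established in Proposition~\ref{s3prop0} and Corollary~\ref{subspace} is equivalent to the Lopatinski--Shapiro complementing condition for this linearisation, whence the linearised BVP is elliptic. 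By shrinking $\mathcal{U}_N$ we may take $\alpha$ arbitrarily small in $C^2$, so continuity of principal symbols ensures that the linearisation of the full nonlinear BVP at $\alpha$ also satisfies Lopatinski--Shapiro.

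For the bootstrap, since $\alpha\in C^{2,\gamma}$ the quasilinear principal-part coefficients lie in $C^{1,\gamma}$. Differentiating the interior equation and both boundary conditions tangentially along $\p N$ yields, for each tangential derivative of $\alpha$, a linear elliptic BVP with $C^\gamma$ coefficients still satisfying Lopatinski--Shapiro. Standard Schauder (equivalently, $L^p$) estimates for such BVPs---of the kind underlying Proposition~\ref{s3prop0} and detailed in Morrey or Schwarz's monograph---then give $\alpha\in C^{3,\gamma}$ in the tangential directions; normal derivatives are recovered from the interior equation using invertibility of the principal symbol of $d^*_+d$ in the normal direction. Iterating produces $\alpha\in C^{k,\gamma}$ for every $k$, and hence $\alpha\in C^\infty(N)$.

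The main obstacle is organising the boundary regularity cleanly: the Lopatinski--Shapiro complementing condition must be extracted explicitly from the Fredholm statement, and the tangential-then-normal bootstrap must be arranged so that both types of derivatives are controlled simultaneously up to $\p N$. These points are standard within the Douglis--Nirenberg/Agmon--Douglis--Nirenberg framework, but the first-order, mixed-nature structure of the boundary operators---one acting on the tangential part of $\alpha$, the other on its normal part---requires some care in the bookkeeping of the estimates.
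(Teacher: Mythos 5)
Your proof is correct and takes essentially the same route as the paper: the paper's entire proof is a one-line appeal to the general nonlinear elliptic regularity theorem \cite[Theorem 6.8.2]{Morrey}, which asserts that $C^2$ solutions of a second-order nonlinear elliptic equation with suitable boundary conditions are smooth, and your linearise-then-bootstrap argument (Sobolev embedding into $C^{2,\gamma}$, ellipticity of the linearised boundary value problem, tangential differentiation plus Schauder/$L^p$ estimates) is precisely the standard proof of that cited result made explicit. The extra care you flag about extracting the complementing condition from the Fredholm statement and handling the first-order boundary operators is exactly what the citation to Morrey is meant to absorb.
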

\begin{proof}  We can apply to $\widetilde G$ the general elliptic regularity result
\cite[Theorem  6.8.2]{Morrey}, which implies that $C^2$ solutions to a
(nonlinear) second-order elliptic equation (with suitable boundary conditions) are smooth.
\end{proof}

Our first main theorem, Theorem \ref{thm1}, now follows from this technical result.
\begin{thm}\label{s4thm1}
Let $\phi\in\Omega^3_+(M)$ be a closed positive 3-form on a 7-manifold $M$.
 Recall the map $G$ given in~\eq{Gmap} and the space 
$\Gamma(\mathcal{U}_N)_{\text{\emph{bc}}}$ defined in~\eq{gamma-bc}.
 Let $N$
be a compact coassociative submanifold of $(M,\varphi)$ with non-empty
boundary $\p N\subset S$, where $S$ is a scaffold for~$N$. For $p>4$, $k\ge 2$, 
an $L^p_{k+1}$-neighbourhood of zero in the space
$$
\mathcal{M}(N,S)=
\{\alpha\in\Gamma(\mathcal{U}_N)_{\text{\emph{bc}}}\;:\; G(\alpha)=0\}
$$
of coassociative local deformations of~$N$ is a smooth manifold
 parameterized by the finite-dimensional vector space
$(\mathcal{H}^2_+)_{\text{\emph{bc}}}$ given in Corollary~\ref{subspace}.
\end{thm}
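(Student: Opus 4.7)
The plan is to apply the Banach-space Implicit Function Theorem to the map $\widetilde{G}$ defined in~\eqref{tildeGmap}. Viewing $\widetilde G$ as a smooth map from an $L^p_{k+1}$-neighbourhood of zero in $\Omega^2_+(N)$ into the closed subspace $V^p_{k-1}$ from~\eqref{target}, smoothness of $G$ is supplied by Proposition~\ref{s4prop1}, while smoothness of the remaining components $\hat F_{\p N}\circ(\bfn\lrcorner\cdot)$ and $\bfn\lrcorner \hat F$ follows by the same type of Sobolev-multiplier argument (using $p>4$, $k\geq 2$ as in the remark after Proposition~\ref{s4prop1}). The set $\widetilde G^{-1}(0)$ in an $L^p_{k+1}$-neighbourhood of zero is, after shrinking $\mathcal{U}_N$ if necessary, precisely the set of $\alpha\in L^p_{k+1}(\mathcal U_N)_{\mathrm{bc}}$ with $G(\alpha)=0$.

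Next, I would compute the linearization $d\widetilde G|_0$. Lemma~\ref{linearize} gives $dG|_0(\alpha)=d^*_+d\alpha$; Theorem~\ref{mclean-SL}(b) gives $d\hat F_{\p N}|_0(\bfn\lrcorner\alpha)=d_{\p N}(\bfn\lrcorner\alpha)$; and direct differentiation of~\eqref{coassoc} combined with Lemma~\ref{linearize} yields $d(\bfn\lrcorner\hat F)|_0(\alpha)=\bfn\lrcorner d\alpha$. These are exactly the operators whose common kernel inside $\Omega^2_+(N)_{\mathrm{bc}}$ was identified with the space $(\mathcal{H}^2_+)_{\mathrm{bc}}$ in Corollary~\ref{subspace}, via the equivalence of the boundary system in Lemma~\ref{equiv}. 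The point of the orthogonal projection $\Pi$ in~\eqref{tildeGmap} is to land in the image $V^p_{k-1}$ of the Fredholm map~\eqref{map}; combined with Proposition~\ref{s3prop0}, this makes $d\widetilde G|_0:L^p_{k+1}\Omega^2_+(N)\to V^p_{k-1}$ surjective with finite-dimensional kernel equal to $(\mathcal H^2_+)_{\mathrm{bc}}$, of dimension at most $b^1(\p N)$.

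With this in hand, the Implicit Function Theorem for Banach spaces yields that $\widetilde G^{-1}(0)$ is, near zero, a smooth Banach submanifold of $L^p_{k+1}\Omega^2_+(N)$ of dimension $\dim(\mathcal{H}^2_+)_{\mathrm{bc}}$, locally parameterised by its tangent space $(\mathcal{H}^2_+)_{\mathrm{bc}}$. Proposition~\ref{s4prop3} upgrades any $\alpha$ in this zero set to a smooth form, so the moduli space sits inside $\Gamma(\mathcal U_N)_{\mathrm{bc}}$, and Proposition~\ref{extmap} identifies $\widetilde G^{-1}(0)$ with $\mathcal M(N,S)$, the set of coassociative local deformations of $N$ with boundary in $S$. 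The main obstacle is verifying that the linearisation $d\widetilde G|_0$ is genuinely surjective onto $V^p_{k-1}$; this relies on carefully matching the nonlinear boundary conditions in~\eqref{gamma-bc} to the linear boundary system of~\eqref{bdrycondn3} via Lemma~\ref{equiv}, and on using the projection $\Pi$ to absorb the gap between the natural codomain of $(G,\hat F_{\p N}(\bfn\lrcorner\cdot),\bfn\lrcorner\hat F)$ and the Fredholm image space~$V^p_{k-1}$.
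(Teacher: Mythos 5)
Your overall strategy is the paper's: apply the Banach-space Implicit Function Theorem to the map $\widetilde G$ of \eqref{tildeGmap}, identify its linearisation at zero with the Fredholm operator \eqref{map}, use Proposition~\ref{s4prop3} for regularity of solutions, and use Proposition~\ref{extmap} together with the discussion after \eqref{tildeGmap} to identify the zero set with $\mathcal{M}(N,S)$. The ingredients you cite are the right ones.

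There is, however, one genuine gap: you take the domain of the Implicit Function Theorem to be a neighbourhood of zero in the \emph{full} space $L^p_{k+1}\Omega^2_+(N)$ and assert that $d\widetilde G|_0:L^p_{k+1}\Omega^2_+(N)\to V^p_{k-1}$ is surjective with kernel $(\mathcal{H}^2_+)_{\text{bc}}$. Surjectivity is fine, but the kernel claim fails on the full space. Since $\widetilde G$ is post-composed with the $L^2$-orthogonal projection $\Pi$ onto the \emph{proper} closed subspace $V^p_{k-1}$, its linearisation annihilates everything that \eqref{map} sends into the orthogonal complement of $V^p_{k-1}$. Concretely, by the surjectivity in Proposition~\ref{s3prop0} one can find, for any nonzero exact 2-form $\beta$ on $\p N$, a form $\alpha$ with $d^*_+d\alpha=0$, $d_{\p N}(\bfn\lrcorner\alpha)=\beta$ and $d_{\p N}(\alpha|_{\p N})=0$; the triple $(0,\beta,0)$ is $L^2$-orthogonal to $V^p_{k-1}$ (whose elements have vanishing middle component, because they are images of forms satisfying $d_{\p N}(\bfn\lrcorner\alpha')=0$), so $\alpha\in\Ker d\widetilde G|_0$ although $\alpha\notin(\mathcal{H}^2_+)_{\text{bc}}$. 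The kernel on the full space is therefore infinite-dimensional, and the Implicit Function Theorem as you invoke it cannot deliver a finite-dimensional moduli space. The remedy, which is the route the paper takes, is to restrict the domain to the Banach submanifold $W=L^p_{k+1}(\mathcal{U}_N)_{\text{bc}}$, whose tangent space at zero is $X=L^p_{k+1}\Omega^2_+(N)_{\text{bc}}$ from \eqref{hsdbc}: since $V^p_{k-1}$ is by definition (Corollary~\ref{subspace}) the image of $X$ under \eqref{map}, the projection $\Pi$ is harmless on that image, $d\widetilde G|_0:X\to V^p_{k-1}$ is surjective, and its kernel is exactly $(\mathcal{H}^2_+)_{\text{bc}}$. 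This is precisely why the paper records, after \eqref{gamma-bc}, that $L^p_{k+1}(\mathcal{U}_N)_{\text{bc}}$ is a Banach submanifold with the stated tangent space; your closing remark about ``using $\Pi$ to absorb the gap'' gestures at the issue, but the absorption only works once the domain has been cut down to $X$.
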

\begin{proof}
For $p>4$ and $k\ge 2$, let
$W=L^p_{k+1}(\mathcal{U}_N)_{\mathrm{bc}}$ and 
$X=L^p_{k+1}\Omega^2_+(N)_{\mathrm{bc}}$.
We see that $X$ is a Banach space and an open neighbourhood of zero in $X$
parameterizes an open neighbourhood of zero in the Banach submanifold
$W\subset L^p_{k+1}\Omega^2_+(N)$. Further, $\widetilde{G}$, given in~\eq{tildeGmap}, 
 satisfies $\widetilde{G}(W)\subset V=V^p_{k-1}$, $\widetilde{G}(0)=0$  
 and, by construction, $d\widetilde{G}|_0:X\to V$ is the
surjective linear operator~\eq{map}. 

Therefore, we can apply the Implicit Function Theorem to deduce that, as
$\widetilde{G}$ is a smooth map,  
the kernel of $\widetilde{G}$ in $W$ near zero is a manifold smoothly
parameterized by a neighbourhood of zero in the kernel
$(\mathcal{H}^2_+)_{\text{bc}}$ of $d\widetilde{G}|_0$ in $X$. By Proposition
\ref{s4prop3}, elements of the kernel of $\widetilde{G}$ near zero are smooth,
so by further reducing, if necessary, the neighbourhood $\mathcal{U}_N$ of the
zero section we obtain, noting also the comments after Proposition~\ref{s4prop1},
that $\widetilde{G}^{-1}(0)$ in $W$ near zero is exactly $\mathcal{M}(N,S)$.
\end{proof}

\subsection{Varying the {\boldmath $\textbf{G}_2$}-structure}

In this subsection we prove our second main result, Theorem~\ref{thm2}, which shows
that coassociative submanifolds with boundary are `stable' under small
perturbations of the $\G2$-structure on the ambient 7-manifold.
\begin{thm}\label{extn-ca-b}
Let $\phi(t)\in\Omega^3_+(M)$, $t\in\RE$, be a smooth path of
closed positive 3-forms on a 7-manifold $M$. Let $N$ be a compact coassociative submanifold of 
$(M,\varphi(0))$ with non-empty boundary $\p N\subset S$, where $S$ is a scaffold
for~$N$. Suppose 
that $\phi(t)|_N$ is exact on $N$ and $\bfn\lrcorner(\phi(t)|_{N})$ 
is exact on $\p N$ for each~$t$.

There is an $\eps>0$ and, for each $|t|<\eps$, a normal vector field
$\bfv(t)\in\Gamma(\hat\nu_M(N))$ 
smoothly depending on~$t$ and such that
$$\bfv(0)=0,\quad \p N(t)\subset S\quad\text{and}\quad \text{$\phi(t)$ vanishes on $N(t)=\widehat{\exp}_{\bfv(t)}(N)$.}$$
Here the normal bundle $\hat\nu_M(N)$ and the exponential map are taken
with respect to the metric $\hat{g}$ given by Proposition~\ref{newmetric} applied to $g(\phi(0))$.
\end{thm}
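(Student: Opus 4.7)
The plan is to extend the deformation-map construction of Section \ref{s4} to a smooth family $\widetilde G(t,\,\cdot\,)$ parameterised by $t$, and then apply the Banach space Implicit Function Theorem on a complement of the kernel of the linearisation at $(t,\alpha)=(0,0)$. Mirroring \eq{coassoc}, \eq{Gmap}, \eq{FSL} and \eq{tildeGmap}, define
\[
\hat F_t(\alpha)=\widehat{\exp}_{\bfv}^*\bigl(\phi(t)|_{N_\alpha}\bigr),\qquad
G_t(\alpha)=d_+^*\hat F_t(\alpha),\qquad
\hat F_{\p N,t}(\beta)=\widehat{\exp}_{\bfv}^*\bigl(\bfn_S\lrcorner\phi(t)|_{\p N_{\beta}}\bigr),
\]
throughout using the isomorphism $\jmath_N$, the metric $\hat g$ and the exponential map $\widehat{\exp}$ attached to $\phi(0)$. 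The hypothesis that $\bfn\lrcorner\phi(t)|_N$ is exact on $\p N$ ensures $\hat F_{\p N,t}$ takes values in $d\Omega^1(\p N)$, so that the composite
\[
\widetilde G(t,\alpha)=\Pi\circ\bigl(G_t(\alpha),\hat F_{\p N,t}(\bfn\lrcorner\alpha),\bfn\lrcorner\hat F_t(\alpha)\bigr)\in V^p_{k-1}
\]
is defined and (by the analogue of Proposition \ref{s4prop1}) smooth on $(-\eps_0,\eps_0)\times L^p_{k+1}(\mathcal U_N)$ for $p>4$, $k\ge 2$. Because $N$ is coassociative with respect to $\phi(0)$ and, by Lemma \ref{SLCA}, $\p N$ is special Lagrangian in $S$, one has $\hat F_0(0)=0$ and $\hat F_{\p N,0}(0)=0$, so $\widetilde G(0,0)=0$.

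Next, the partial derivative $d_\alpha\widetilde G|_{(0,0)}$ coincides with the Fredholm operator \eq{map} restricted to $L^p_{k+1}\Omega^2_+(N)_{\mathrm{bc}}$ of Corollary \ref{subspace}; it is surjective onto $V^p_{k-1}$ with finite-dimensional kernel $(\HH^2_+)_{\mathrm{bc}}$. Choose an $L^2$-orthogonal complement $X'\subset L^p_{k+1}\Omega^2_+(N)_{\mathrm{bc}}$ to $(\HH^2_+)_{\mathrm{bc}}$, so that $d_\alpha\widetilde G|_{(0,0)}\colon X'\to V^p_{k-1}$ is a Banach space isomorphism. Applying the Implicit Function Theorem to the restriction of $\widetilde G$ to $(-\eps_0,\eps_0)\times(X'\cap\mathcal U_N)$, there exist $\eps\in(0,\eps_0)$ and a smooth curve $t\mapsto\alpha(t)\in X'$ with $\alpha(0)=0$ and $\widetilde G(t,\alpha(t))=0$ for $|t|<\eps$.

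It remains to deduce that $\bfv(t):=\jmath_N^{-1}(\alpha(t))$ produces a genuine coassociative deformation. By the argument following \eq{tildeGmap}, $\widetilde G(t,\alpha(t))=0$ entails $\alpha(t)\in L^p_{k+1}(\mathcal U_N)_{\mathrm{bc}}$ (with the boundary conditions now taken with respect to $\phi(t)$) and $G_t(\alpha(t))=0$. The exactness of $\phi(t)|_N$ together with a tubular-neighbourhood retraction onto $N$ implies that $\phi(t)$ is exact on an open set containing $N$, and hence $\hat F_t(\alpha(t))$ is an exact 3-form on $N$. The integration-by-parts argument of Proposition \ref{extmap} then applies verbatim, using $G_t(\alpha(t))=0$ and $\bfn\lrcorner\hat F_t(\alpha(t))=0$ on $\p N$, to force $\hat F_t(\alpha(t))=0$; that is, $\phi(t)$ vanishes on $N(t)=\widehat{\exp}_{\bfv(t)}(N)$. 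The boundary condition $\p N(t)\subset S$ is built into $L^p_{k+1}(\mathcal U_N)_{\mathrm{bc}}$, and Proposition \ref{s4prop3} upgrades each $\alpha(t)$ to a smooth self-dual 2-form, while smooth dependence on $t$ is furnished by the IFT.

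The main obstacle is the non-triviality of $\ker d_\alpha\widetilde G|_{(0,0)}=(\HH^2_+)_{\mathrm{bc}}$, which prevents a direct IFT argument; splitting this kernel off and solving in a transverse complement $X'$ is essential, and this is legitimate only because $\widetilde G(0,0)=0$ on the nose—a consequence of the coassociativity of $N$ and the special Lagrangian property of $\p N$ under $\phi(0)$. The two exactness hypotheses on $\phi(t)|_N$ and on $\bfn\lrcorner\phi(t)|_N$ at $\p N$ play complementary roles: the first makes the integration-by-parts step in Proposition \ref{extmap} carry through for each $t$, while the second is needed just to ensure $\widetilde G(t,\,\cdot\,)$ lands in the target space $V^p_{k-1}$ in the first place.
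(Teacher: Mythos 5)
Your argument is correct and follows essentially the same route as the paper: the same parametric deformation map $\widetilde{G}(t,\alpha)$ built from $G$, $\hat F_{\p N}$ and $\bfn\lrcorner\hat F$, the same identification of the linearisation with the surjective Fredholm operator \eq{map}, an Implicit Function Theorem argument transverse to $(\mathcal{H}^2_+)_{\mathrm{bc}}$, and the same use of the two exactness hypotheses to upgrade $\widetilde{G}(t,\alpha)=0$ to genuine coassociativity via the integration-by-parts step of Proposition~\ref{extmap}. The only cosmetic difference is that the paper treats the kernel directions in $(\mathcal{H}^2_+)_{\mathrm{bc}}$ as extra parameters alongside $t$ (solving $\widetilde{G}(t,\alpha_0+h(t,\alpha_0))=0$ and then setting $\alpha_0=0$), whereas you restrict to the orthogonal complement $X'$ from the outset.
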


\begin{remarks} Observe that $S$ need not be a scaffold for $N(t)$ relative to the
$\G2$-structure $\phi(t)$ when $t\neq 0$.  Furthermore, the normal vectors $\bfv(t)$ 
can be chosen to satisfy the boundary condition~\eq{bdrycondn5}.
\end{remarks} 
\begin{pf}
Let $\jmath_N:\hat{\nu}_M(N)\rightarrow(\Lambda^2_+)_{g(0)}T^*N$ be the isomorphism given by Proposition \ref{newnu} and let $p>4$ and
$k\geq 2$. 
Let $W_0=L^p_{k+1}(\mathcal{U}_N)_0$ denote a Banach submanifold of 
$L^p_{k+1}(\mathcal{U}_N)_{\mathrm{bc}}$ such that the tangent space of 
 $W_0$ 
at the zero form is the $L^2$-orthogonal complement $X_0$ 
to $(\mathcal{H}^2_+)_{\mathrm{bc}}$ in $L^p_{k+1}\Omega^2_+(N)_{\text{bc}}$,
 in the metric $g_0=g(\phi(0))$.  Let $\Gamma(\mathcal{U}_N)_0$ be the subset of smooth sections in $W_0$.  
We use a `parametric' version of the deformation
map~\eqref{coassoc} which we still denote by $\hat F$,
$$
\hat{F}:(t,\alpha)\in\RE\times\Gamma(\mathcal{U}_N)
\rightarrow 
\widehat{\exp}_{\bfv}^*\big(\varphi(t)|_{N_{\alpha}}\big)\in \Omega^3(N),
$$
where $\bfv=\jmath_N^{-1}(\alpha)$. 
Further, define `parametric' versions of the maps 
$G=d^*_+\circ\hat{F}$ (where $d^*_+$ is calculated using $g_0$) and $\hat{F}_{\p N}$, given in~\eq{Gmap} and~\eq{FSL}, 
by replacing $\phi$ with $\phi(t)$ throughout, so these
maps now take an additional argument $t$. 
 Notice that, if
$$\widehat{G}(t,\alpha)=\big(G(t,\alpha),\hat{F}_{\p N}(t,\bfn\lrcorner\alpha),\bfn\lrcorner\hat{F}(t,\alpha)\big),$$
then its partial derivative $d_2\widehat{G}|_{(0,0)}$ acts on $(t,\alpha)\in\R\oplus L^p_{k+1}\Omega^2_+(N)$ as $(t,\alpha)
\to L(\alpha)$, where $L$ is the surjective linear operator~\eq{map}.  
Thus, the image of $d_2\widehat{G}|_{(0,0)}$ restricted to $\R\times X_0$ is 
the Banach space $V_0=V^p_{k-1}$  given in~\eq{target}.
Let $\Pi$ be the $L^2$-orthogonal projection to $V_0$ defined using the metric $g_0$ and let $\widetilde{G}=\Pi\circ\widehat{G}$.

 Since $\phi(t)|_N$ and its normal part on $\p N$ are exact for
each $t$, a simple adaptation of the argument in Proposition \ref{extmap} shows
that, for $\alpha\in\Gamma(\mathcal{U}_N)_0$, $\widetilde{G}(t,\alpha)=0$ if and only if $N_\alpha$ is
coassociative relative to $\phi(t)$.  Moreover, $\widetilde{G}(\R\times W_0)\subset V_0$, $\widetilde{G}(0,0)=0$  
and $d_2\widetilde{G}|_{(0,0)}:\R\oplus X_0\rightarrow V_0$ is an isomorphism.

By the Implicit Function Theorem for $\widetilde{G}(t,\alpha)$ and an analogous regularity result to Proposition \ref{s4prop3} 
(valid for $t$ small as $\widetilde{G}(0,\alpha)$ is elliptic and ellipticity is an open condition), 
there is a smooth map $h$ defined on a neighbourhood $E_0$ of zero in
$\RE\times(\mathcal{H}^2_+)_{\text{bc}}$, taking values in $X_0$, 
such
that $h(0,0)=0$ and
$$
\widetilde{G}\big(t,\alpha_0+h(t,\alpha_0)\big)=0,\qquad (t,\alpha_0)\in E_0,
$$
are all the zeros of $\widetilde{G}$ near $(0,0)$. The required $\bfv(t)$, for small
$|t|$, may be taken to be $\jmath_N^{-1}\big(h(t,0)\big)$.
\end{pf}

\subsection{Examples}\label{examples}

We now give some simple examples for this deformation theory.

\begin{ex}[$\G2$-manifolds with symplectic boundary]
Suppose $(M,\phi)$ is an (almost) $\G2$-manifold with boundary $\p M$.  Then
$\p M$ has trivial normal bundle and it receives an induced
$\SU(3)$-structure from $M$. If $\mathbf{n}_{\p M}\lrcorner\phi|_{\p M}$ is a closed form on $\p M$,
where $\mathbf{n}_{\p M}$ is the unit normal vector field at $\p M$, then $\p M$
is a scaffold for coassociative submanifolds of $N\subset M$ with boundary
in $\p M$ and with $\mathbf{n}_{\p M}$ tangent to~$N$. Our deformation
theory results apply to this situation.
\end{ex}

\begin{ex}[$\G2$-manifolds with nearly K\"ahler boundary]
Perhaps the most obvious $\G2$-manifold with boundary to study is the unit ball $B$ in $\R^7$.  The boundary of $B$ 
is the nearly K\"ahler 6-sphere $\mathcal{S}^6$.  Suppose $N$ is a compact coassociative submanifold of $B$ with 
boundary in $\mathcal{S}^6$.  Then $\p N$ is a \emph{Lagrangian} (also called \emph{totally real}) 
submanifold of $\mathcal{S}^6$: the non-degenerate, but not closed, 2-form on $\mathcal{S}^6$ vanishes on $\p N$.  
Current work in progress of the second author shows that, for any nearly K\"ahler 6-manifold, 
the deformation theory of 
a Lagrangian submanifold is expected to be \emph{obstructed} up to rigid motion.  Therefore, one could not hope, in general, for a smooth
 moduli space of deformations of $N$ in the unit ball $B$ in $\R^7$.  This negative result extends to any $\G2$-manifold
with nearly K\"ahler boundary, or any coassociative 4-fold with boundary in a nearly K\"ahler `scaffold'.  This gives 
another motivation for our definition of a scaffold.
\end{ex}

\begin{ex}[Product $\G2$-manifolds 1]
A K\"ahler complex 3-fold $(S,\omega)$ is called \emph{almost Calabi--Yau} if
it admits a nowhere vanishing holomorphic $(3,0)$-form $\Omega$. 
Then $M=S\times\mathcal{S}^1$ is an almost $\G2$-manifold with
$\G2$-structure $\omega\we \d\theta+\mathrm{Re}\,\Omega$, where $\theta$
is a coordinate on~$\mathcal{S}^1$. Let $N=L\times\mathcal{S}^1\subset M$ be a
compact coassociative 4-fold. Then $L$ is special Lagrangian in $S$.  
We can think of $N$ as an embedding of a manifold $L\times[0,1]$ whose two
boundary components, $L\times\{0\}$ and $L\times\{1\}$, are mapped to
$L$ in $S$. It is not difficult to see that $S\times\mathrm{pt}$ is a
scaffold for $N$. Theorem \ref{s4thm1} gives us that $N$ has a smooth
moduli space of coassociative deformations $\mathcal{M}(N,S)$ with
dimension $\le 2b^1(L)$.

Let $\alpha\in\Omega^2_+(N)$.  Then $\alpha=\xi_\theta\w\d\theta+*_L\xi_\theta$,
for some path of 1-forms $\xi_\theta$ on $L$.  
It follows from \cite[Theorem
3.4.10]{Schwarz} that a harmonic self-dual 2-form on $N$ is uniquely
determined by its values $\xi_0$, $\xi_1$ on the boundary.
The subspace of harmonic $\alpha\in\Omega^2_+(N)$ such that $\xi_0$ and $\xi_1$
are harmonic on~$L$ has dimension $2b^1(L)$ and corresponds
precisely to the paths $\xi_\theta=(1-\theta)\xi_0+\theta\xi_1$.
On the other hand, $\alpha\in (\HH^2_+)_{\text{bc}}$ if and only if $\alpha$
is harmonic and $\frac{\partial\xi_{\theta}}{\partial\theta}=0$, 
so $\xi_0=\xi_1$. Thus
$\dim(\HH^2_+)_{\text{bc}}=b^1(L)<b^1\big((L\times\{0\})\sqcup 
(L\times\{1\})\big)$ in this example.

This can also be seen geometrically.  If the deformations of the aforementioned 
two boundary components coincide in~$S\times\mathrm{pt}$ then, by taking a
product with $S^1$, we obtain a coassociative deformation of
$N=L\times\mathcal{S}^1$ defining a point in $\mathcal{M}(N,S)$. On the other
hand, if a coassociative deformation $\tilde{N}$ of $N$ is such that the
deformations $\tilde{L}_0$ and $\tilde{L}_1$ of $L\times\{0\}$ and $L\times\{1\}$ are special
Lagrangian but {\em distinct} then  $\tilde{N}$ and $\tilde{L}_0\times S^1$ are
two distinct coassociative 4-folds intersecting in a real analytic 3-fold on which $\phi$ vanishes, which
contradicts \cite[Theorem IV.4.3]{HarLaw}. Therefore, $\mathcal{M}(N,S)$ is
identified with special Lagrangian deformations of $L$ in the almost
Calabi--Yau manifold $S$. It is well known that these deformations
have a smooth moduli space of dimension $b^1(L)$ \cite[Theorem 8.4.5]{Joyce2}.

Moreover, suppose we have a smooth path of closed positive 3-forms $\phi(t)$ on $M$ with $\phi(0)=\phi$.  
Suppose $\phi(t)|_N$ is exact and the normal 
part of $\phi(t)|_N$ on $\p N$ is exact.  Theorem \ref{extn-ca-b} says that $N$ extends to a smooth family $N(t)$ 
of compact 4-folds with boundary in $S$ such that $N(t)$ is coassociative in $(M,\phi(t))$.  

Now, we can write 
$$\phi(t)=\omega(t)\w\d\theta+\Upsilon(t)$$
with $\omega(0)=\omega$ and $\Upsilon(0)=\Upsilon$.  The conditions on $\phi(t)$ are equivalent to the 
exactness of $\omega(t)|_L$ and $\Upsilon(t)|_L$ on $L$, together with the fact that $\omega(t)$ and $\Upsilon(t)$
 define an (almost) Calabi--Yau structure on $S$.  
These are precisely the necessary and sufficient conditions, by \cite[Theorem 8.4.7]{Joyce2}, 
for $L$ to be extended to a smooth family $L(t)$ of 
compact 3-folds in $S$ such that $L(t)$ is special Lagrangian in $S$ with respect to $(\omega(t),\Upsilon(t))$.
This applies to embeddings of $N(t)=L(t)\times\mathcal{S}^1$ and to the more
general embeddings of $N(t)=L(t)\times [0,1]$ with images of $L(t)\times\{0\}$ and 
$L(t)\times\{1\}$ in~$S\times\mathrm{pt}$.
\end{ex}

Finally we relate our work to the theory presented in \cite{Butscher}.

\begin{ex}[Product $\G2$-manifolds 2] 
Suppose $M=S\times\mathcal{S}^1$ is as in the previous example and $N=L\times\mathcal{S}^1$ is a coassociative 
4-fold in $M$.  However, now suppose that the special Lagrangian 3-fold $L$ has boundary $\p L$ in a (real) 4-dimensional scaffold $W$ in
 the almost Calabi--Yau manifold $S$ in the sense of \cite{Butscher}.  Thus $N$ has boundary 
$\p N=\p L\times\mathcal{S}^1$ in $W\times\mathcal{S}^1$.  The 5-dimensional submanifold $W\times\mathcal{S}^1$ is
 obviously not a scaffold in the sense of Definition \ref{s3dfn2}, so our deformation theory does not apply.  
However the result of Butscher's work \cite{Butscher} is that the deformation theory of $L$ as a \emph{minimal Lagrangian}
 in $S$ with boundary in $W$ is unobstructed and the dimension of the moduli space is $b^1(L)$.

Motivated by this example the authors considered the possibility of a 5-dimensional `scaffold' and derived the 
conditions that it would have to satisfy if it were to be a generalisation of Butscher's scaffold.  Unfortunately, 
the resulting deformation problem turned out not to be elliptic and, moreover, that the deformation theory of
coassociative submanifolds in such a 5-dimensional `scaffold' would be obstructed in general.  We can see 
this problem as follows.

The deformation theory we need for a coassociative $N$ with boundary in $W\times\mathcal{S}^1$ 
is for special Lagrangians with boundary in $W$. These are deformations of
minimal Lagrangians with `fixed phase' which means the vanishing of
$\mathrm{Im}(e^{i\lambda}\Omega)$ for some $\lambda\in\RE$ fixed once and for
all. However, the deformation theory of special Lagrangians with boundary
in $W$ is obstructed.  Therefore, even in this simplest case of a
5-dimensional `scaffold' we do not get a smooth moduli space of
deformations.
\end{ex}

\begin{ack}
The authors would like to thank Damien Gayet for pointing out an error in an earlier version of this article.  
The second author thanks Daniel Fox for interesting conversations.
\end{ack}

\end{document}